\def\mathcal{\mathscr}
\newtheorem{thm}{Theorem}[section]
\newtheorem{lem}[thm]{Lemma}
\newtheorem{cor}[thm]{Corollary}
\newtheorem{prop}[thm]{Proposition}
\theoremstyle{definition}
\newtheorem{rem}[thm]{Remark}
\newtheorem{defn}[thm]{Definition}
\newcommand{\mca}[1]{{\mathcal{#1}}}
\newcommand\Q{{\mathbb Q}}
\newcommand\Z{{\mathbb Z}}
\newcommand\R{{\mathbb R}}
\newcommand\capp{\text{\rm cap}}
\newcommand\const{\text{\rm const}}
\newcommand\diam{\text{\rm diam}}
\newcommand\dist{\text{\rm dist}}
\newcommand\ep{\varepsilon}
\newcommand\Hess{\text{\rm Hess}}
\newcommand\HZ{\text{\rm HZ}}
\newcommand\id{\text{\rm id}}
\newcommand\Ima{\text{\rm Im}}
\newcommand\inj{\text{\rm inj}}
\newcommand\interior{\text{\rm int}}
\newcommand\pr{\text{\rm pr}}
\newcommand\St{\text{\rm St}}
\newcommand\supp{\text{\rm supp}}
\newcommand\vol{\text{\rm vol}}
\newcommand\Vo{\text{\rm Vo}}
\begin{document}
\pagestyle{plain}
\thispagestyle{plain}

\title[Displacement energy of unit disk cotangent bundles]
{Displacement energy of unit disk cotangent bundles}

\author[Kei Irie]{Kei Irie}
\address{Research Institute for Mathematical Sciences, Kyoto University, 
Kyoto 606-8502, Japan.}
\email{iriek@kurims.kyoto-u.ac.jp}

\keywords{Displacement energy, Unit disk cotangent bundle, Symplectic embedding problem, Short periodic billiard trajectory, Short geodesic loop.}

\maketitle

\begin{abstract}
We give an upper bound of a Hamiltonian displacement energy of a unit disk cotangent bundle $D^*M$
in a cotangent bundle $T^*M$, when the base manifold $M$ is an open Riemannian manifold. 
Our main result is that the displacement energy is not greater than $C r(M)$, where $r(M)$ is the inner radius of $M$, and $C$ is a dimensional constant. 
As an immediate application, we study symplectic embedding problems of unit disk cotangent bundles. 
Moreover, combined with results in symplectic geometry, our main result shows the 
existence of short periodic billiard trajectories and short geodesic loops. 
\end{abstract}

\section{Introduction}
\subsection{Displacement energy} 

\textit{Displacement energy} is an important quantity in symplectic geometry, introduced by H. Hofer \cite{Hofer}. 
First we recall its definition. 
Let $(X,\omega)$ be a symplectic manifold. 
For any $H \in C^\infty(X)$, 
its \textit{Hamiltonian vector field} $X_H$ is defined as 
$\omega(X_H, \, \cdot \,) = dH(\, \cdot \,)$. 
For any $H \in C_c^\infty([0,1] \times X)$ ($C_c^\infty$ denotes the set of compactly supported smooth functions)
and $0 \le t \le 1$, 
$H_t \in C_c^\infty(X)$ is defined as $H_t(x):=H(t,x)$  
and its \textit{Hofer norm} $\| H\|$ is defined as 
\[
\|H\| := \int_0^1 \max H_t - \min H_t \, dt. 
\]
For any $H \in C_c^\infty([0,1] \times X)$, isotopy $(\varphi_H^t)_{0 \le t \le 1}$ on $X$ is defined by 
\[
\varphi_H^0 = \id_X, \qquad 
\partial_t \varphi_H^t = X_{H_t} (\varphi_H^t). 
\]
For any compact set $K \subset X$, we define 
\[
e(K:X) := \inf \{ \| H\| \mid H \in C_c^\infty([0,1] \times X) ,\quad  \varphi^1_H(K) \cap K = \emptyset \}. 
\]
For any set $Y \subset X$, we define $e(Y:X):= \sup_K e(K:X)$, where $K$ runs over all compact sets contained in $Y$, and call it 
\textit{displacement energy} of $Y$. 

\subsection{Main result}
For any manifold $M$, its cotangent bundle $T^*M$ carries a canonical symplectic form. 
Let $\pi_M: T^*M \to M$ denote the natural projection, and define $\lambda_M \in \Omega^1(T^*M)$ as 
\[
\lambda_M(\xi):= p(d\pi_M(\xi)) \qquad  \bigl( q \in M, \, p \in T_q^*M, \, \xi \in T_{(q,p)}(T^*M) \bigr). 
\]
Then $\omega_M:= d\lambda_M$ is a symplectic form on $T^*M$. 

Let $M$ be a $n$-dimensional Riemannian manifold without boundary.
$D^*M$ denotes the unit disk cotangent bundle of $M$, i.e.
\[
D^*M:= \bigl\{(q,p) \in T^*M \bigm{|} |p| <1 \bigr\}.
\]
Let us set  
$d(M):= e(D^*M: T^*M)$. 
When $M$ is compact, $d(M)=\infty$ 
since the zero-section of $T^*M$ is not displaceable by Hamiltonian diffeomorphisms (see e.g. \cite{LS}). 
In this paper, we try to bound $d(M)$ from above when $M$ is noncompact. 

We define $r(M)$, the \textit{inner radius} of $M$, as follows ($\dist_M$ denotes the distance function with respect to the Riemannian metric on $M$): 
\begin{itemize}
\item For any compact set $K \subset M$, $r_M(K):= \max_{x \in K} \dist_M(x, M \setminus K)$. 
\item $r(M):= \sup_K r_M(K)$, where $K$ runs over all compact sets in $M$. 
\end{itemize}
Here are a few immediate remarks: 
\begin{itemize}
\item 
For any nonempty open set $U$ in $\R^n$ (with the flat metric), 
$r(U)$ is equal to the supremum of radii of balls in $U$.
\item 
When $N$ is a compact Riemannian manifold with boundary and $M= \interior N$, 
$r(M)= \max_{x \in N} \dist_N(x, \partial N)$. 
\item 
When $N$ is a closed Riemannian manifold and $x \in N$, 
$r(N \setminus \{x\}) = \max_{y \in N} \dist_N(x,y)$. 
In particular, $r(N \setminus \{x\}) \le \diam (N)$. 
\end{itemize}

The main result of this paper is the following:

\begin{thm}\label{mainthm}
Let $n$ be an integer, and 
$M$ be a $n$-dimensional noncompact Riemannian manifold without boundary. 
Then $d(M) \le \const_n r(M)$.
\end{thm}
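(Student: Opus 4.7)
The plan is to reduce Theorem \ref{mainthm} to an explicit construction on the base manifold. Let $\pi\colon T^*M \to M$ be the canonical projection. For any compact $K \subset DT^*M$, the base projection $K' := \pi(K)$ is a compact subset of $M$ with $r_M(K') \le r(M)$, and setting $DT^*M|_{K'} := \pi^{-1}(K') \cap DT^*M$ one has $K \subset DT^*M|_{K'}$. It therefore suffices to construct, for each compact $K' \subset M$ (writing $r := r_M(K')$), a compactly supported Hamiltonian $H$ on $T^*M$ with Hofer norm $\|H\| \le \const_n \cdot r$ whose time-$1$ flow displaces $DT^*M|_{K'}$.

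The geometric core of the proof is a lemma on $M$ alone: produce a compactly supported smooth vector field $X$ on $M$ satisfying (a) $\phi_X^1(K') \cap K' = \emptyset$, (b) $\|X\|_{C^0} \le C_n\,r$, and (c) $\|DX\|_{C^0} \le L_n$, where $C_n$ and $L_n$ depend only on $n$. The intuition is that every point of $K'$ lies within distance $r$ of $M \setminus K'$, so a field of magnitude $\sim r$ pointing toward the exterior flushes $K'$ out in unit time. Concretely, I would take $X := -(r + \delta)\,\chi \cdot \nabla \tilde D$, where $D(x) := \dist(x, M \setminus K') - \dist(x, K')$ is the signed distance to $\partial K'$, $\tilde D$ is a mollification (built in local charts glued by a partition of unity) at length scale $\sim r$ so that $|\nabla \tilde D| \approx 1$ and $\|\nabla^2 \tilde D\|_\infty = O(1/r)$, $\chi$ is a bump function truncating $X$ in the exterior of $K'$, and $\delta > 0$ is a small auxiliary parameter. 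Along a trajectory $t \mapsto \phi_X^t(x)$ with $x \in K'$, $D$ decreases at rate $\approx r + \delta$, so it drops from $[0, r]$ into negative values by time $1$; hence $\phi_X^1(x) \in M \setminus K'$.

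Given such $X$, I would lift it to the Hamiltonian $H(q,p) := \rho(|p|^2)\,\langle p, X(q)\rangle$ on $T^*M$, where $\rho$ is a smooth cutoff equal to $1$ on $[0, R_0^2]$ and vanishing outside $[0, R_1^2]$. Inside $\{|p| \le R_0\}$ the Hamiltonian reduces to $\langle p, X(q)\rangle$, whose flow is the cotangent lift of $\phi_X^t$ and projects to $\phi_X^t$ on the base. Along such a trajectory, $|p(t)| \le e^{\|DX\|_{C^0}}\,|p(0)| \le e^{L_n}$ whenever $|p(0)| \le 1$, so choosing $R_0 := e^{L_n}$ keeps every trajectory emanating from $DT^*M|_{K'}$ inside $\{\rho = 1\}$ for all $t \in [0,1]$. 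Property (a) then implies that the time-$1$ flow of $H$ displaces $DT^*M|_{K'}$, and the Hofer norm is controlled by
\[
\|H\| \le 2R_1\,\|X\|_{C^0} \le 2(R_0 + 1)\,C_n\,r,
\]
yielding $\|H\| \le \const_n \cdot r$ with $\const_n := 2(e^{L_n} + 1)C_n$, independent of $r$ and $K'$.

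The hardest step is the geometric lemma in the second paragraph. The signed distance function $D$ is only Lipschitz and non-smooth along the cut locus of $\partial K'$, so one must mollify at a length scale carefully matched to $r$ in order to simultaneously preserve $|\nabla \tilde D| \approx 1$ (so the flow decreases $D$ at the expected rate $\sim r$) and keep $\|\nabla^2 \tilde D\|_\infty = O(1/r)$ (so (c) holds with an $n$-dependent constant $L_n$). Performing this mollification in Riemannian normal coordinates and patching via a Whitney-style partition of unity, while uniformly controlling the metric on the length scale $r$, is where the dimensional constant in $\const_n$ truly originates.
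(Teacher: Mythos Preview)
Your geometric lemma fails already in the Euclidean plane. Take $K' = [-1,1] \times [-L,L] \subset \R^2$ with $L$ large, so $r = 1$; away from the short ends the signed distance is $D(x_1,x_2) = 1 - |x_1|$, and by the symmetry $x_1 \mapsto -x_1$ any mollification satisfies $\nabla \tilde D(0,x_2) = 0$. Hence $X$ vanishes along the segment $\{0\} \times (-L+1, L-1)$, those points are fixed by $\phi_X^t$, and (a) is violated. The phenomenon is general: near the medial axis of $\partial K'$ the one-sided gradients of $D$ point in opposing directions and cancel under smoothing, so one cannot have $|\nabla \tilde D| \approx 1$ throughout $K'$ (indeed $D$ attains an interior maximum, so any $C^1$ approximation has a critical point inside $K'$). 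A second, independent problem is that your bound (c) with $L_n$ depending only on $n$ presupposes uniform control of the metric at scale $r$, but the hypotheses place no bound on the curvature or injectivity radius of $M$; ``mollifying in normal coordinates at scale $r$'' is not available when $\inj(M) \ll r$.

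The paper bypasses both obstacles by displacing in the \emph{fiber} direction rather than the base. It introduces the width $w_M(K) := \inf\{\|h\| : h \in C_0^\infty(M),\ |dh| \ge 1 \text{ on } K\}$ and observes that for $H = h \circ \pi_M$ the Hamiltonian vector field is the vertical translation $X_H = dh$, so the time-$2$ map sends every covector of norm $<1$ over $K$ to one of norm $>1$; this yields $d(M) \le 2w(M)$ with no $C^1$ control on $h$ needed. The substantive work is then the purely scalar estimate $w(M) \le \const_n\, r(M)$, which the paper obtains by doubling a neighborhood of $K$ into a closed manifold $N$ with $\diam N \le (2+\varepsilon)r(M)$ and proving $w_N(K) \le \const_n \diam N$ via a thick-triangulation construction.
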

\begin{rem}
The above inequality means that: there exists a positive constant $c$ depending only on $n$, which satisfies 
$d(M) \le cr(M)$ for any $M$. 
\end{rem}

Viterbo \cite{Viterbo} proves the following result, and apply it to prove the existence of a 
short periodic billiard trajectory (Theorem 4.1 in \cite{Viterbo}).

\begin{thm}[Viterbo]\label{thm:viterbo}
Let $U$ be a nonempty open set in $\R^n$, and $i:T^*U \to T^*\R^n$ be the obvious embedding. 
Then $e(i(D^*U): T^*\R^n) \le \const_n \vol(U)^{1/n}$. 
\end{thm}

Theorem \ref{thm:viterbo} easily follows from Theorem \ref{mainthm}, since $r(U) \le \const_n \vol(U)^{1/n}$ and 
$e(i(D^*U):T^*\R^n) \le e(D^*U:T^*U)$. 

\subsection{Notations} 
Before describing applications of our main result, we fix several notations 
which we will use in the rest of the paper. 

Let $N$ be a Riemannian manifold. 
$\inj (N)$ denotes its injective radius. 
For any $p \in N$, 
$\exp_p: \{v \in T_pN \mid |v|< \inj(N)\} \to N$ denotes an exponential map at $p$. 
For any $p,q \in N$ such that $\dist_N(p,q) < \inj(N)$, 
$\overrightarrow{pq} \in T_pN$ is defined as 
$\exp_p(\overrightarrow{pq})=q$.
$\gamma_{pq}$ denotes the shortest geodesic from $p$ to $q$, i.e. 
$\gamma_{pq}(t):=\exp_p(t\overrightarrow{pq})\,(0 \le t \le 1)$. 
When $p \ne q$, we set $e_{pq}:=\overrightarrow{pq}/|\overrightarrow{pq}|$. 

For any $p \in N$ and positive real numbers $a, b$, we set 
\begin{align*}
B_N(x:a)&:= \{y \in N \mid \dist_N(x,y)<a\}, \qquad
B_N(x:a,b):=\{y \in N \mid a< \dist_N(x,y)<b \}, \\
S_N(x:a)&:= \{y \in N \mid \dist_N(x,y)=a\}. 
\end{align*}
When $N=\R^n$, we denote them as $B^n(x:a)$, $B^n(x:a,b)$, $S^{n-1}(x:a)$. 
In particular, we adopt abbreviations 
$B^n(a):=B^n((0,\ldots,0):a)$, 
$S^{n-1}(a):=S^{n-1}((0,\ldots,0):a)$. 

\subsection{Applications} 

\subsubsection{Expanding embeddings and symplectic embeddings} 

Let $U, V$ be open sets in $\R^n$. 
An embedding $f:U \to V$ is called an \textit{expanding embedding} if there holds 
$|df(\xi)| \ge |\xi|$ for any $\xi \in TU$. 

If there exists an expanding embedding $U \to V$, it obviously induces a symplectic embedding 
$D^*U \to D^*V$. 
It is natural to ask to what extent the converse of this is true. 
This question is addressed in \cite{Guth}, and it is shown (see pp. 479 in \cite{Guth}) 
that symplectic embeddings of unit disk cotangent bundles are more flexible than expanding embeddings, 
when $U$ and $V$ are rectangles in $\R^n$. 
On the other hand, the following corollary of Theorem \ref{mainthm} is somewhat in the opposite direction. 
We use the Hofer-Zehnder capacity (which we denote by $c_\HZ$ ) in the proof. For its definition and basic properties, see \cite{HZ} and references therein. 

\begin{cor}\label{cor:embedding}
Let $V$ be an open set in $\R^n$. 
If there exists a symplectic embedding $D^*B^n(1) \to D^*V$, then 
$r(V) \ge \const_n$. Therefore, there exists an isometric embedding 
$B^n(\const_n) \to V$. 
\end{cor}
\begin{proof}
We set $c:= c_\HZ(D^* B^n(1)) >0$. 
If there exists a symplectic embedding $D^*B^n(1) \to D^*V$, we have 
$c \le c_\HZ(D^*V)$. 
On the other hand, the energy-capacity inequality 
(see \cite{HZ} Chapter 5, Theorem 7) and Theorem \ref{mainthm} show 
$c_\HZ(D^*V) \le d(V) \le \const_n r(V)$. 
Thus $c \le \const_n r(V)$. Since $c$ is a dimensional constant, this completes the proof. 
\end{proof} 

\begin{rem}\label{rem:sharpness}
It is clear that $c_\HZ(D^*B^n(r)) = c_\HZ(D^*B^n(1))r = cr$. Thus, for any open set $V \subset \R^n$ we have  
$c r(V)  \le c_\HZ(D^*V)$. Hence the above energy-capacity inequality implies 
$c r(V) \le d(V)$. This shows that Theorem \ref{mainthm} is sharp (up to a dimensional constant) for open sets in $\R^n$. 
\end{rem} 

\subsubsection{Short periodic billiard trajectory}
First we clarify a definition of periodic billiard trajectory.

\begin{defn}\label{defn:billiard}
Let $N$ be a Riemannian manifold, possibly with boundary. 
Then, a \textit{periodic billiard trajectory} on $N$ is a continuous map $\gamma:\R/\Z \to N$, such that there exists 
a finite set $B_\gamma \subset \R/\Z$ with the following properties:
\begin{enumerate}
\item[(1)] On $(\R/\Z) \setminus B_\gamma$, $\gamma$ is smooth and satisfies $\ddot{\gamma} \equiv 0$.
\item[(2)] For any $t \in B_\gamma$, $\gamma(t) \in \partial N$. Moreover, 
$\dot{\gamma}_{\pm}(t):= \lim_{h \to \pm 0} \dot{\gamma}(t+h)$ satisfy 
\[
\dot{\gamma}_+(t) + \dot{\gamma}_-(t) \in T_{\gamma(t)}\partial N, \qquad
\dot{\gamma}_-(t) - \dot{\gamma}_+(t) \in (T_{\gamma(t)}\partial N)^{\perp} \setminus \{0\}.
\]
$B_\gamma$ is called the set of \textit{bounce times}.
\end{enumerate}
\end{defn}
\begin{rem}\label{rem:billiard}
In the above definition, 
a closed geodesic is a periodic billiard trajectory (the set of bounce times is empty). 
\end{rem}

\begin{prop}\label{prop:billiard}
Let $N$ be a $n$-dimensional compact Riemannian manifold with nonempty boundary.
Then, there exists a periodic billiard trajectory on $N$ with at most $n+1$ bounce times and 
of length not greater than $d(\interior N)$.
\end{prop}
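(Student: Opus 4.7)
The plan is to realize periodic billiard trajectories on $M$ as limits of Hamiltonian $1$-orbits on $T^*\interior M$, using a steep boundary barrier to simulate reflection and the hypothesis on $d(\interior M)$ to control both their existence and their action.

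First, fix $\epsilon>0$ and embed $\interior M$ as an open subset of a noncompact Riemannian manifold $\hat M$ obtained by attaching an outward collar to $\partial M$. By the definition of $d(\interior M)$, for each sufficiently large compact $K\subset DT^*\interior M$ there exists a compactly supported time-dependent Hamiltonian $F$ on $T^*\interior M$ with $\|F\|\le d(\interior M)+\epsilon$ that displaces $K$. For each parameter $R>0$ I would also fix an autonomous Hamiltonian $G_R$ on $T^*\hat M$, supported in a small neighborhood of $\partial M$, taken to be a steep convex function of the signed distance to $\partial M$, whose flow reflects outward-pointing cotangent vectors back into $M$ in an arbitrarily short time as $R\to\infty$.

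Next, I would concatenate $F$ and $G_R$ into a single Hamiltonian $H_R$ on $T^*\hat M$ and apply a Floer-theoretic argument---Hamiltonian Floer homology on a Liouville completion containing $DT^*\interior M$, or Lagrangian intersection theory for a suitable deformation of the zero-section---to produce a nonconstant contractible $1$-periodic orbit $\gamma_R$ of $H_R$ whose symplectic action is bounded by $\|F\|$ plus a contribution from $G_R$ that vanishes as $R\to\infty$. The displacement hypothesis is precisely what forces the existence of $\gamma_R$ with this action bound.

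I would then pass to the limit $R\to\infty$ via a Gromov-type compactness argument: the barriers $G_R$ confine $\gamma_R$ to $M$ and, in the limit, enforce specular reflection at finitely many boundary-touching times. The limit $\gamma$ is a periodic billiard trajectory on $M$ of length equal to the limiting action, hence at most $d(\interior M)+\epsilon$; letting $\epsilon\to 0$ yields the length bound.

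For the bound $|B_\gamma|\le n+1$ on the number of bounces, I would use a dimension-counting/Morse-index argument on the space of broken geodesic loops in $M$: the natural Floer generator realizing the displacement sits in degree at most $n=\dim M$, and in the limiting billiard picture this degree equals $|B_\gamma|-1$. The main technical obstacles are the compactness analysis as $R\to\infty$---specifically, excluding trivial, tangential, or infinitely bouncing limits and justifying that the action converges to the length---together with the precise identification of the Floer index with the number of bounces.
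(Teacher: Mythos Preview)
The paper does not supply its own proof of this proposition; it attributes the result to Albers--Mazzucchelli, noting that their argument rests on an energy--capacity inequality together with the approximation technique of Benci--Giannoni. Your outline reproduces exactly this structure: the steep barrier $G_R$ is the Benci--Giannoni approximation, and producing $\gamma_R$ via Floer theory with action bounded by $\|F\|$ is precisely an instance of the energy--capacity inequality. So, as far as comparison with the paper is concerned, you are on the same route it points to, and the length bound $\le d(\interior M)$ follows along the lines you indicate once the compactness analysis is carried out.

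The one place where your sketch is genuinely imprecise is the bound on the number of bounces. The claim that the Floer degree ``equals $|B_\gamma|-1$'' is not correct as stated. In the cited argument the approximating orbits $\gamma_R$ have Conley--Zehnder (equivalently, Morse) index lying in a fixed window determined by $n$, and in the limit $R\to\infty$ this controls the Morse index of the broken geodesic for the length functional. A separate variational argument---essentially already in Benci--Giannoni---then shows that a periodic billiard trajectory with more than $n+1$ bounce points would have Morse index too large to arise this way. This index-versus-bounce-count step is exactly the content that goes beyond Viterbo's original observation, and it does not reduce to a one-line degree identity; you should expect real work there, not just dimension counting.
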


Proposition \ref{prop:billiard} is observed in \cite{Viterbo} (proof of Theorem 4.1), 
although it does not contain the estimate of the number of bounce times. 
We can prove Proposition \ref{prop:billiard} in the same way as Theorem 1.2 in \cite{Albers-Mazzucchelli}. 
The arguments in \cite{Albers-Mazzucchelli} are 
based on a version of the energy-capacity inequality, 
and the approximation technique due to \cite{Benci-Giannoni}.

\begin{rem}\label{rem:AM}
Although \cite{Albers-Mazzucchelli} is working on domains in Euclidean space, 
most arguments in the proof of Theorem 1.2 \cite{Albers-Mazzucchelli} work on general compact Riemannian manifolds with no change, 
at least when the potential function (which is denoted by $V$ in \cite{Albers-Mazzucchelli}) is constant. 
The only place we have to slightly change is the proof of Proposition 2.2 \cite{Albers-Mazzucchelli}. 
More precisely, the formula (2.32) in \cite{Albers-Mazzucchelli} should be replaced with the following formula: 
\[
\Hess \,  \mca{L}^{E_\ep}_\ep(\Gamma_\ep,\tau_\ep)[(\Psi_\ep,0), (\Psi_\ep,0)]=A_\ep - B_\ep -\tau_\ep^{-1} \int_0^1 \langle R(\dot{\Gamma_\ep}, \Psi_\ep) \Psi_\ep , \dot{\Gamma}_\ep \rangle \, dt. 
\]
$R$ denotes the curvature tensor, and the other symbols are same as in \cite{Albers-Mazzucchelli}. 
The last term does not appear in the flat case. However, it is uniformly bounded on $\ep$, since 
$\|\dot{\Gamma}_\ep \|_{L^\infty}$ is bounded (see pp. 3295 in \cite{Albers-Mazzucchelli}), and 
$\|\Psi_\ep \|_{L^\infty}$ and $\tau_\ep^{-1}$ are bounded by the assumption. 
Thus this new term does not violate the proof in \cite{Albers-Mazzucchelli}. 
\end{rem}

By Proposition \ref{prop:billiard}, Theorem \ref{mainthm} implies the following corollary:

\begin{cor}\label{cor:billiard}
Let $N$ be a $n$-dimensional compact Riemannian manifold with nonempty boundary.
Then, there exists a periodic billiard trajectory on $N$ with at most $n+1$ bounce times and 
of length not greater than $\const_n \max_{x \in N} \dist(x, \partial N)$. 
\end{cor}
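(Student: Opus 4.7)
The plan is to combine Proposition \ref{prop:billiard} with Theorem \ref{mainthm} applied to the interior of $M$, which is essentially the only thing one can do here.

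First I would verify that $\interior M$ is a noncompact $n$-dimensional Riemannian manifold without boundary, so that Theorem \ref{mainthm} applies to it. It is clearly an $n$-dimensional manifold without boundary, equipped with the Riemannian metric inherited from $M$. Noncompactness follows from the hypothesis $\partial M \ne \emptyset$: any point of $\partial M$ is a limit of points in $\interior M$, so $\interior M$ is not closed in the compact space $M$, hence not compact itself.

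Next, I would apply Theorem \ref{mainthm} to $\interior M$ to obtain
\[
d(\interior M) \le \const_n\, r(\interior M).
\]
Then I would invoke Proposition \ref{prop:billiard} (which we may assume by the discussion preceding the corollary, including Remark \ref{rem:AM}) to produce a periodic billiard trajectory on $M$ with at most $n+1$ bounce times and length at most $d(\interior M)$. Chaining the two estimates yields the asserted bound $\const_n\, r(\interior M)$ on the length.

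There is no genuine obstacle in this argument; the only mild subtlety is confirming that Proposition \ref{prop:billiard}, as stated, really is available in the setting of a general Riemannian manifold with boundary rather than just a bounded domain in Euclidean space. This is precisely what Remark \ref{rem:AM} asserts about the proof in \textnormal{\cite{Albers-Mazzucchelli}}, so no further argument is required beyond citing it. The corollary is then immediate.
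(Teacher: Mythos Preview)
Your proposal is correct and matches the paper's approach exactly: the corollary is stated as an immediate consequence of Proposition~\ref{prop:billiard} combined with Theorem~\ref{mainthm} applied to $\interior M$. The only addition you make is the explicit check that $\interior M$ is noncompact, which the paper leaves implicit.
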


In \cite{Irie}, Corollary \ref{cor:billiard} is proved 
when $N$ is a bounded domain in Euclidean space. 
This result improves a result of Viterbo on short periodic billiard trajectory
(Theorem 4.1 in \cite{Viterbo}). 
See also \cite{AO} for relevant results. 

\subsubsection{Short geodesic loop}
Given a Riemannian manifold $N$, a \textit{geodesic loop} at $x \in N$ means a geodesic
$c:[0,1] \to N$ such that $c(0)=c(1)=x$. 
$c$ can be singular at $x$, i.e. we do not require that $\dot{c}(0) = \dot{c}(1)$. 
A trivial geodesic loop at $x$ means a constant loop at $x$. 

\begin{cor}\label{cor:loop}
For any closed Riemannian manifold $N$ and $x \in N$, there exists a nontrivial geodesic loop at $x$ 
of length not greater than $\const_n \diam(N)$.
\end{cor}

Corollary \ref{cor:loop} is not a new result.
Actually, Rotman \cite{Rotman} proves 
the following stronger result: 

\begin{thm}[Rotman]\label{thm:Rotman}
For any closed Riemannian manifold $N$ and $x \in N$, there exists a nontrivial geodesic loop at $x$ with length not greater than 
$2j \diam(N)$, where $j:=\min\{ i \mid \pi_i(N) \ne 0\}$. 
In particular, the length of the shortest geodesic loop at $x$ is not greater than $2n\diam(N)$.
\end{thm}

Our proof of Corollary \ref{cor:loop} is completely different from the arguments in \cite{Rotman}, 
and makes use of  the following lemma, based on arguments of Mohnke \cite{Mohnke}.
Recall that for any contact manifold $(Y, \lambda)$, its \textit{symplectization} is $Y \times \R_{>0}$ endowed with a $1$-form 
$\tilde{\lambda}(z,r):= r \lambda(z) \,( z \in Z, r \in \R_{>0})$. 

\begin{lem}\label{lem:Mohnke}
Let $(W,d\lambda)$ be an exact symplectic manifold of bounded geometry, and 
$S$ be a closed hypersurface in $W$. 
Suppose that $(S,\lambda)$ is a contact manifold, and there exists an embedding 
$i: S \times (0,1] \to W$ such that $i^* \lambda = \widetilde{\lambda|_S}$, and its image 
$i(S \times (0,1])$ is Hamiltonian displaceable in $(W,d\lambda)$. 
Then, any closed Legendrean on $(S,\lambda)$ admits a Reeb chord $\gamma$ satisfying $\int_\gamma \lambda \le e(i(S \times (0,1]):W)$. 
\end{lem} 

The proof is same as that of Theorem 4 in \cite{Mohnke} and is omitted. 
Notice that we do not need a condition on $\pi_1$ (which is assumed in Theorem 4 \cite{Mohnke}), since we are working on 
an exact symplectic manifold. 

Now we prove Corollary \ref{cor:loop} from Theorem \ref{mainthm} and Lemma \ref{lem:Mohnke}:
\begin{proof}
Let $\rho$ be a nonincreasing smooth function on $[0,\infty)$ such that 
$\rho\equiv 1$ near $0$, $\supp \rho \subset [0,1]$ and $\rho'(t)<0$ when $0<\rho(t)<1$.
For $0<\delta<\inj(N)$, define $\rho_\delta$ by $\rho_\delta(t):=\rho(t/\delta)$. 
We define $V_\delta \in C^\infty(N)$, 
$H_\delta \in C^\infty(T^*N)$, 
$D_\delta \subset T^*N$ by 
\[
V_\delta(q):=\rho_\delta(\dist_N(x,q)), \quad
H_\delta(q,p):=V_\delta(q)+|p|^2/2, \quad
D_\delta:= \{H_\delta \le 1/2\}.
\]

Then, it is easy to see that there exists $\lambda \in T^*N$ which satisfies
$d\lambda=\omega_N$, $\lambda|_{\{p=0\}} \equiv 0$ and the following property:
\begin{quote}
$(\partial D_\delta, \lambda)$ is a contact manifold. Moreover, there exists an embedding 
$i: \partial D_\delta \times (0,1] \to D_\delta$ such that $i^*\lambda= \widetilde{\lambda|_{\partial D_\delta}}$. 
\end{quote}
For an elementary proof of this fact, see Lemma 10 \cite{Irie}. 
(See also Lemma 5.2 \cite{CFP}.)

Then, Lemma \ref{lem:Mohnke} shows that any Legendrean on $(\partial D_\delta, \lambda)$ has 
a Reeb chord $\gamma$ such that $\int_\gamma \lambda \le e(D_\delta: T^*N)$. 
In particular, a Legendrean $\{p=0\} \cap \partial D_\delta$ has a Reeb chord $\gamma_\delta$ 
such that $\int_{\gamma_\delta} \lambda \le e(D_\delta: T^*N)$. 
On the other hand, since $D_\delta \subset D^*(N \setminus \{x\})$, 
Theorem \ref{mainthm} implies that 
$e(D_\delta: T^*N) \le \const_n r(N \setminus \{x\}) \le \const_n \diam(N)$. 

By reparametrizing $\gamma_\delta$, one gets $\Gamma_\delta: [0,T_\delta] \to T^*N$ such that 
$\Gamma_\delta(0), \Gamma_\delta(T_\delta) \in \{p=0\}$, 
$\partial_t\Gamma_\delta = X_{H_\delta}(\Gamma_\delta)$, and 
$\int_{\Gamma_\delta} \lambda_N  = \int_{\Gamma_\delta} \lambda \le \const_n \diam(N)$
(the first equality holds since $d\lambda=d\lambda_N=\omega_N$, and both $\lambda$ and $\lambda_N$ vanish on $\{p=0\}$). 

Then, $q_\delta:= \pi_N(\Gamma_\delta)$ satisfies 
$\partial_t^2q_\delta + \nabla V_\delta(q_\delta)=0$, $q_\delta(0), q_\delta(T_\delta) \in \{V_\delta=1/2\}$.
Since $0< \delta < \inj(N)$, it is easy to show that 
$q_\delta([0,T_\delta])$ is not contained in $B_N(x: \inj(N))$. 
Since $V_\delta \equiv 0$ on $N \setminus B_N(x: \delta)$, 
there exists a geodesic $c_\delta:[0,T'_\delta] \to N \setminus B_N(x: \delta)$
with length not greater than $\const_n \diam (N)$, $c_\delta(0), c_\delta(T'_\delta) \in S_N(x:\delta)$ and 
$c_\delta([0,T'_\delta])$ is not contained in $B_N(x: \inj(N))$. 

Finally, take an arbitrary sequence $(\delta_n)_n$ such that $\lim_{n \to \infty} \delta_n=0$.
Then, a certain subsequence of 
$(c_{\delta_n})_n$ converges to a nontrivial geodesic loop at $x$ with length not greater than 
$\const_n \diam(N)$.
\end{proof}

\subsection{Relations of Theorem \ref{mainthm}  to \cite{Irie}, \cite{Irie_JSG}, and sharpness of Theorem \ref{mainthm}}

In this subsection, we discuss relations of Theorem \ref{mainthm} to results in \cite{Irie}, \cite{Irie_JSG}. 
We also discuss some speculations on sharpness of Theorem \ref{mainthm}. 
This subsection is less self-contained than the other parts of this paper, and results in this subsection 
are not used in the rest of this paper. 

In \cite{Irie}, the author introduced the notion of capacity of Riemannian manifolds, which we denote by $\capp_R$. 
Let us briefly recall the definition. Let $M$ be a (open) Riemannian manifold without boundary. 
$\mca{V}(M)$ denotes the set of $V \in C^\infty(M)$, such that $0$ is a regular value and $\{V \le 0\}$ is compact. 
For any $V \in \mca{V}(M)$, we set $D_V:= \{ (q,p) \in T^*M \mid V(q)+|p|^2/2 \le 0\}$. Then we define
\[
\capp_R(M):= \sup \{ \capp_S(D_V, \omega_M) \mid V \in \mca{V}(M), V > -1/2 \}. 
\]
For definition of $\capp_S$, see Definition 2 \cite{Irie}.
If $D$ is a restricted contact type domain in $T^*\R^n$, $\capp_S(D, \omega_{\R^n})$ is equal to the Floer-Hofer capacity of $U$ 
(see Definition 5.6, Proposition 5.7 \cite{Hermann}). 

A key result in \cite{Irie} (Theorem 12 \cite{Irie}) claims that, any nonempty open set $U \subset \R^n$ satisfies 
the following estimate, where $c_0(n)$ and $c_1(n)$ are dimensional constants:
\begin{equation}\label{eq:c01}
c_0(n) \le \capp_R(U) / r(U) \le c_1(n).
\end{equation}
In \cite{Irie}, we use this upper bound of $\capp_R(U)$ to prove Corollary \ref{cor:billiard} for bounded domains in $\R^n$.
The next Proposition \ref{prop:cR_disp} shows that, 
$\capp_R$ gives a lower bound of the displacement energy. 

Hence Theorem \ref{mainthm} strengthens and generalizes the upper bound in (\ref{eq:c01}), which is the harder part of (\ref{eq:c01}). 

\begin{prop}\label{prop:cR_disp}
Let $M$ be a $n$-dimensional noncompact Riemannian manifold without boundary. 
Then there holds $\capp_R(M) \le d(M)$. 
\end{prop} 
\textit{Sketch of proof \,}
It is enough to prove $\capp_S(D_V) \le d(M)$ for any $V \in \mca{V}(M)$ such that $V>-1/2$. 
It is easy to see that $D_V \subset D^*M$, thus $e(D_V: T^*M) \le d(M)$. 
Thus it is enough to show that 
$\capp_S(D_V, \omega_M) \le e(D_V: T^*M)$. 
This is a version of the energy-capacity inequality, and when $M$ is an open set in $\R^n$, this follows from 
Theorem 1.4 \cite{Hermann}, which bounds the Floer-Hofer capacity by the displacement energy. 
The proof of this result (see Section 5.3 \cite{Hermann}) 
applies with no change to the general case. 
\qed 

In Theorem 1.6 of our recent paper \cite{Irie_JSG}, we proved that the following refinement of (1) holds for any nonempty open set $U \subset \R^n$:
\begin{equation}\label{eq:c01'}
2 \le \capp_R(U)/ r(U) \le 2(n+1). 
\end{equation}
In \cite{Irie_JSG}, the assumption that $U$ is an open set in $\R^n$ is required for purely technical reasons, while in \cite{Irie} it is a crucial assumption. 
Therefore, it would be reasonable to expect that (\ref{eq:c01'}) holds if we replace $U$ with any open Riemannian manifold $M$.  
If this is true, the expected lower bound implies $2r(M) \le \capp_R(M) \le d(M)$. 
It indicates that 
Theorem \ref{mainthm} is sharp (up to a dimensional constant)  for any open Riemannian manifold.

\section{Width of Riemannian manifolds}

In this section, we introduce a new notion of \textit{width} of Riemannian manifolds. 

In Section 2.1, we explain the definition of the invariant, and prove some of its properties. 
In particular, we bound $d(M)$ by the width of $M$ (Lemma \ref{lem:w}), and reduce Theorem \ref{mainthm} to 
Theorem \ref{thm:width_radius}, which bounds the width from above by the inradius. 
Thus our goal is to prove Theorem \ref{thm:width_radius}. 

In Section 2.2, we prove Theorem \ref{thm:width_radius} 
for open sets in Euclidean space, since this case is considerably simpler than the general case. 

It is far more difficult to  prove Theorem \ref{thm:width_radius} for arbitrary Riemannian manifolds. 
In Section 2.3, we reduce it to Theorem \ref{thm:closed}, which is simpler than Theorem \ref{thm:width_radius}. 
The rest of this paper is devoted to the proof of Theorem \ref{thm:closed}. 
In Section 2.4, we sketch its proof and explain the structure of this paper after Section 2.

\subsection{A definition and properties} 

First we explain a definition of width of Riemannian manifolds. 

\begin{defn} 
Let $M$ be a Riemannian manifold without boundary. 
For any $h \in C^\infty(M)$, we set $\|h\|:= \sup h - \inf h$. 
For any compact set $K$ on $M$, we define  
\begin{align*}
w_M(K):&= \inf \bigl\{ \|h\| \bigm{|}  h \in C^\infty(M),\, \text{$|dh| \ge 1$ on $K$} \bigr\} \\
      &=  \inf \bigl\{ \|h\| \bigm{|}  h \in C_c^\infty(M),\, \text{$|dh| \ge 1$ on $K$} \bigr\}. 
\end{align*}
The \textit{width} of $M$, denoted as $w(M)$, is defined as 
\[
w(M):= \sup_{K \subset M} w_M(K), 
\]
where $K$ runs over all compact sets on $M$. 
\end{defn} 
Let us examine a few simple examples. 
\begin{itemize}
\item When $M$ is a closed Riemannian manifold, $w(M)= w_M(M)=\infty$. 
\item When $M$ is an open interval $(a,b)$ with the standard metric on $\R$, it is easy to see that $w(M)= b-a$. 
\item More generally, when $M: =\{(x_1,\ldots,x_n) \in \R^n \mid a<x_1<b\}$, one can show that 
$w(M)= b-a$. ($w(M) \le b-a$ is easy. The opposite follows from Proposition \ref{prop:width_radius} below. )
\end{itemize} 

The next lemma is immediate from the definition of $w$. 

\begin{lem}
Let $M$ be a Riemannian manifold without boundary, and $U$ be an open set in $M$. 
Then, for any compact set $K$ on $U$, 
$w_U(K)= w_M(K)$. 
In particular, $w(U) \le w(M)$. 
\end{lem} 

\begin{defn} 
For Riemannian manifolds $M, M'$ and a smooth map $\varphi: M \to M'$, we define 
\[
d^+(\varphi):= \sup \{ |d\varphi(\xi)| \mid \xi \in TM, |\xi|= 1 \}, \quad
d^-(\varphi):= \inf \{ |d\varphi(\xi)| \mid \xi \in TM, |\xi|=1 \}.
\]
When we need to specify Riemannian metrics, we denote as $d^{\pm}(\varphi:g_M, g_{M'})$, where $g_M$, $g_{M'}$ are Riemannian metrics on $M$, $M'$. 
\end{defn} 

\begin{lem}\label{lem:gg'}
For any Riemannian manifolds $M$, $M'$ and a diffeomorphism $\varphi: M \to M'$, 
there holds $w(M') \le d^+(\varphi) w(M)$. 
\end{lem}
\begin{proof}
We may assume $w(M), d^+(\varphi)< \infty$. 
It is enough to show that 
for any compact set $K$ on $M'$ and $\varepsilon>0$, there holds 
$w_{M'}(K) \le d^+(\varphi) (w_M(\varphi^{-1}(K))+ \varepsilon)$.

By definition of $w_M(\varphi^{-1}(K))$, there exists $h \in C_c^\infty(M)$ such that 
$|dh| \ge 1$ on $\varphi^{-1}(K)$ and $\|h\|  \le w_M(\varphi^{-1}(K)) + \varepsilon$. 
Then, $g:=d^+(\varphi) \varphi_*h \in C_c^\infty(M')$ satisfies 
$|dg| \ge 1$ on $K$ and $\| g \| = d^+(\varphi) \|h \| \le d^+(\varphi) (w_M(\varphi^{-1}(K))+\varepsilon)$. 
Hence we get $w_{M'}(K) \le d^+(\varphi) (w_M(\varphi^{-1}(K))+ \varepsilon)$. 
\end{proof}

\begin{rem}
There are various ways of measuring the "size" of Riemannian manifolds. 
Relations between these invariants, as well as many applications, are 
extensively studied (see Section 9 in \cite{Guth_ICM} and referneces therein). 
The notion of width $w$, which we introduced above, is a version of such invariants. 
In particular, Lemma \ref{lem:gg'} implies that $w$ is a size invariant in the sense of \cite{Guth_ICM}. 

However, the author could not find any invariant in the literature, which is similar to our invariant $w$. 
A particular property of our invariant $w$ is that it becomes infinity (thus trivial) for closed Riemannian manifolds, therefore it is nontrivial 
only for open Riemannian manifolds. 
\end{rem}

The following simple observation is the first key step in the proof of Theorem \ref{mainthm}:

\begin{lem}\label{lem:w}
Let $M$ be a Riemannian manifold without boudary. Then, $d(M) \le 2w(M)$.
\end{lem}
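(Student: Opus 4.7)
Since the closed case is immediate ($w(M) = \infty$ by definition, so the inequality is vacuous), I may assume $M$ is non-compact. Fix a compact $K \subset DT^*M$ and $\epsilon > 0$; it suffices to construct a compactly supported, time-independent Hamiltonian $H$ on $T^*M$ which displaces $K$ and satisfies $\|H\| \le 2 w(M) + \epsilon$. Let $\bar K := \pi(K) \subset M$ denote the base projection, a compact subset of $M$. From the definitions of $w_M$ and $w$ I choose $h \in C_0^\infty(M)$ with $|dh| \ge 1$ on $\bar K$ and $\|h\| \le w_M(\bar K) + \epsilon/2 \le w(M) + \epsilon/2$.

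The plan is to lift $h$ to a Hamiltonian on $T^*M$. The naive lift $(q,p) \mapsto 2h(q)$ has Hamiltonian flow $(q,p) \mapsto (q,\, p - 2t\, dh(q))$, which merely translates fibers; for $(q,p) \in K$ we have $|p| < 1$ and $|dh(q)| \ge 1$, so the reverse triangle inequality gives $|p - 2\, dh(q)| \ge 2|dh(q)| - |p| > 1$ at $t=1$, ejecting $(q,p)$ from $DT^*M \supset K$. Since this lift fails to be compactly supported on $T^*M$, I would multiply by a fiber cutoff: pick a smooth $\chi \colon T^*M \to [0,1]$ depending only on $|p|$, equal to $1$ on $\{|p| \le R\}$ and supported in $\{|p| \le R+1\}$, where $R := 1 + 2\sup_M |dh|$, and set $H(q,p) := 2\, h(q)\, \chi(q,p)$. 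Then $H \in C_c^\infty(T^*M)$.

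The choice of $R$ ensures that a trajectory of $H$ starting at $(q,p) \in K$ satisfies $|p(t)| \le |p| + 2|dh(q)| \le R$ for $t \in [0,1]$, so it stays in the region where $H$ coincides with $2h(q)$, and the explicit displacement argument above survives the cutoff verbatim. For the Hofer norm, because $h$ is compactly supported on the non-compact $M$ we have $\sup h \ge 0 \ge \inf h$; combined with $0 \le \chi \le 1$ this gives $\sup H = 2 \sup h$ and $\inf H = 2 \inf h$, hence $\|H\| = 2\|h\| \le 2 w(M) + \epsilon$. Letting $\epsilon \to 0$ and then taking the supremum over compact $K \subset DT^*M$ yields $d(M) \le 2 w(M)$. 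There is no serious obstacle; the only delicate point is choosing the fiber cutoff large enough that the straight-line fiber translation is preserved on the whole interval $[0,1]$.
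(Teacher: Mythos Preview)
Your proof is correct and follows essentially the same idea as the paper's: pull back $2h$ to $T^*M$ and observe that its Hamiltonian flow is fiber translation by $2t\,dh$, which pushes $K$ out of $DT^*M$ since $|p \pm 2\,dh(q)| \ge 2|dh(q)| - |p| > 1$. The only difference is that you make the fiber cutoff $\chi(|p|)$ explicit and verify $\|H\| = 2\|h\|$ via $\sup h \ge 0 \ge \inf h$, whereas the paper simply remarks that $\bigcup_{0 \le t \le 1} \varphi^t_{2H}(K)$ is compact and concludes the displacement energy is bounded by $\|2H\|$ without writing down the cutoff.
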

\begin{proof}
We may assume $w(M)<\infty$.
Let $K$ be a compact set in $D^*M$.
For any $\delta>0$, there exists $h \in C^\infty_0 (M)$ such that 
$\|h\| \le w(M)+\delta$ and $|dh| \ge 1$ on $\pi_M(K)$, where $\pi_M$ is the natural projection $T^*M \to M$.
Let $H:=h \circ \pi \in C^\infty(T^*M)$.
Let $(q_1,\ldots,q_n)$ be a local chart on $M$, and let $(p_1,\ldots,p_n)$ be the associated chart on cotangent fibers.
Then, the Hamiltonian vector field $X_H$ of $H$ is caliculated as:
\[
X_H= \sum_i \frac{\partial H}{\partial q_i} \frac{\partial}{\partial p_i} - \frac{\partial H}{\partial p_i} \frac{\partial}{\partial q_i}
= \sum_i \frac{\partial h}{\partial q_i} \frac{\partial}{\partial p_i}
= dh.
\]
Since $|dh| \ge 1$ on $\pi_M(K)$, and $K \subset D^*M$, $2H$ displaces $K$. 
Although $2H$ is not compactly supported, 
the $1$-parameter group $(\varphi^t_{2H})_{t \in \R}$ of $X_{2H}$ is well-defined, and 
$\bigcup_{0 \le t \le 1}\varphi_{2H}^t(K)$ is compact. 
Hence $e(K:T^*M)$ is bounded by $\| 2H \| = \| 2h \|$, 
therefore by $2w(M)$.
\end{proof}

By Lemma \ref{lem:w}, Theorem \ref{mainthm} reduces to the following result. 

\begin{thm}\label{thm:width_radius}
Let $M$ be a $n$-dimensional noncompact Riemannian manifold without boundary. 
Then, $w(M) \le \const_n r(M)$. 
\end{thm} 

The following simple proposition shows that the above estimate is sharp up to a dimensional constant.  

\begin{prop}\label{prop:width_radius}
Let $M$ be as in Theorem \ref{thm:width_radius}. Then, $w(M) \ge 2r(M)$. 
\end{prop}
\begin{proof}
For any $r$ such that $0< r< r(M)$, 
there exists a compact set $K \subset M$ and $x \in K$ such that $\dist(x, M \setminus K) >r$. 
Let $h \in C_c^\infty(M)$ such that $|dh| \ge 1$ on $K$, and let 
$\gamma:\R \to M$ be the integral curve of $\nabla h$ such that $\gamma(0)=x$. 
Let $[t_0,t_1]$ be the largest interval such that $0 \in [t_0,t_1]$ and $\gamma([t_0,t_1]) \subset K$. 
Then, the length  of $\gamma|_{[t_0,t_1]}$ is larger than $2r$, therefore $h(\gamma(t_1)) - h(\gamma(t_0)) > 2r$. 
This shows $w_M(K) \ge 2r$, thus we have $w(M) \ge 2 r(M)$. 
\end{proof}

\subsection{Proof of Theorem \ref{mainthm} for open sets in $\R^n$}

As a warm up, we prove Theorem \ref{thm:width_radius} (thus Theorem \ref{mainthm}) 
for open sets in $\R^n$, since this case is considerably simpler than the general case. 
First we need the following lemma: 

\begin{lem}\label{lem:Zn}
Let $\Z^n$ be the set of integer points on $\R^n$. Then $w(\R^n \setminus \Z^n) \le n$. 
\end{lem}
\begin{proof}
Consider a continuous function $h_0:\R \to [0,1]$ defined by 
\[
h_0(t):= \begin{cases}
         t-2n &(2n \le t \le 2n+1) \\
         2n+2-t    &(2n+1 \le t \le 2n+2) 
         \end{cases}
\]
where $n$ is an integer. By modifying $h_0$, for any $\delta>0$ one can construct a smooth function 
$h_\delta: \R \to [0,1]$ such that $|h'_\delta(t)| \ge 1$ on $I_\delta:=\bigcup_{n \in \Z} [n+\delta, n+1-\delta]$. 
Define $H_\delta: \R^n \to [0,n]$ by $H_\delta(x_1,\ldots,x_n):=h_\delta(x_1)+\cdots+h_\delta(x_n)$. 

Let $K$ be a compact set in $\R^n \setminus \Z^n$. 
Take any $\delta \in \bigl(0, \dist(K,\Z^n)/\sqrt{n} \bigr)$. 
Then, for any $x=(x_1,\ldots,x_n) \in K$, there exists at least one $1 \le i \le n$ such that $x_i \in I_\delta$. Hence 
\[
|\nabla H_\delta(x)| = \sqrt{\sum_i |h'_\delta(x_i)|^2} \ge 1. 
\]
Hence we get $w_{\R^n \setminus \Z^n}(K)= w_{\R^n} (K) \le n$. 
Thus $w(\R^n \setminus \Z^n) = \sup_K w_{\R^n \setminus \Z^n}(K) \le n$. 
\end{proof}

Now we prove Theorem \ref{thm:width_radius} for open sets in $\R^n$. 

\begin{prop}\label{prop:euclid}
For any open set $U$ in $\R^n$, there holds $w(U) \le \const_n r(U)$. 
\end{prop}
\begin{proof}
We may assume $r(U)<\infty$. 
We abbreviate $r(U)$ as $r$. 
For any $j=(j_1,\ldots,j_n) \in \Z^n$, we set 
$P_j:=(10rj_1, \ldots, 10rj_n)$. 
$B^n(P_j: 2r)$ is not contained in $U$, since $r$ is the supremum of radii of balls in $U$. 
Take $Q_j \in B^n(P_j: 2r) \setminus U$ arbitrary, and set $\mca{Q}:= \{Q_j \mid j \in \Z^n \}$.  
Then obviously $U \subset \R^n \setminus \mca{Q}$, hence $w(U) \le w(\R^n \setminus \mca{Q})$. 

On the other hand, for any $j \in \Z^n$, there exists a compactly supported diffeomorphism 
$\varphi_j: B^n(P_j: 3r) \setminus \{P_j\} \to B^n(P_j: 3r) \setminus \{Q_j\}$, satisfying 
$d^+\varphi_j \le \const_n$. 
Since $B^n(P_j: 3r)$ are disjoint for all $j \in \Z^n$, there exists a diffeomorphism 
$\varphi: \R^n \setminus \Z^n \to \R^n \setminus \mca{Q}$ satisfying $d^+\varphi \le \const_n r$. 
Finally, we obtain 
\[
w(\R^n \setminus \mca{Q}) \le \const_n r \cdot w(\R^n \setminus \Z^n) \le \const_n r, 
\]
where the first inequality follows from Lemma \ref{lem:gg'}, and the second inequality follows from Lemma \ref{lem:Zn}.
\end{proof} 

The above argument appeared in \cite{Irie}. 
It seems difficult to extend this argument for general Riemannian manifolds. 

\subsection{Simplification of Theorem \ref{thm:width_radius}}

As a first step to prove Theorem \ref{thm:width_radius}, we use a doubling argument to reduce it to the following Theorem \ref{thm:closed}, 
which is simpler than Theorem \ref{thm:width_radius}.

\begin{thm}\label{thm:closed}
Any closed, connected $n$-dimensional Riemannian manifold $N$ and a compact set $K \subsetneq N$ satisfy 
$w_N(K) \le \const_n \diam(N)$.
\end{thm}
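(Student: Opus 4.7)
The plan is to construct the witness function $h$ using a thick simplicial decomposition of $N$. First I would invoke Lemma~\ref{lem:triangulation} to obtain a smooth triangulation of $N$ whose top-dimensional simplices are parametrised by a standard Euclidean simplex via diffeomorphisms with bi-Lipschitz and $C^1$ distortion bounded by a constant depending only on $n$. I would arrange the mesh (the maximum simplex diameter) to be of order $\diam(N)$, so that every star has diameter at most $\const_n \diam(N)$. Thickness of the triangulation is essential: it guarantees that a gradient of size $1$ in the Euclidean model transfers to a gradient of size at least $\const_n^{-1}$ on the actual simplex, and that an oscillation bound in the model transfers with a bounded multiplicative loss.

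Second, I would $(n+1)$-colour the vertex set of (a barycentric subdivision of) the triangulation, producing a partition $V = V_0 \sqcup \cdots \sqcup V_n$ such that every closed top-dimensional simplex contains exactly one vertex of each colour. For each colour $i$, the open stars $\{\St(v) : v \in V_i\}$ are pairwise disjoint, and on each of them it is straightforward to build a smooth ``peak'' function $f_v$, supported in a slight enlargement of $\St(v)$, satisfying $|df_v| \ge 1$ on a prescribed ``core'' of $\St(v)$ and $\|f_v\| \le \const_n \diam(N)$. Summing over $v \in V_i$ produces a single smooth function $F_i$ on $N$, with $\|F_i\| \le \const_n \diam(N)$, whose gradient is at least $1$ on the union of the color-$i$ cores.

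What remains is to combine the $n+1$ functions $F_0, \ldots, F_n$ into a single $h \in C^\infty(N)$ with $|dh| \ge 1$ everywhere on $K$ and $\|h\| \le \const_n \diam(N)$. Naive summation fails because gradients from different colour classes may cancel in the overlaps of their stars. The approach I would pursue is to choose the cores so that every $x \in K$ lies in the core of at least one $F_i$, and then extract a scalar $h$ from the map $F = (F_0, \ldots, F_n) : N \to \R^{n+1}$ whose gradient at each point of $K$ is dominated by the contribution of the active colour. The hypothesis $K \subsetneq N$ provides a basepoint in $N \setminus K$ at which $h$ can be pinned to zero, controlling the oscillation $\|h\|$ by the diameter.

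The main obstacle is exactly this last combination step: guaranteeing that the gradient lower bound survives uniformly on all of $K$, even at points lying in the overlap of several colour stars, without blowing up the oscillation bound. I expect this to require a delicate, possibly iterative construction that modifies $h$ colour by colour and exploits the disjointness inside each colour class, together with the thick-triangulation bounds of Step~1 to estimate cross-gradient errors; I anticipate it is this step that occupies Sections~4 and~5 of the paper.
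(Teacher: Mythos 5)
Your opening move (a thick triangulation and a colouring of its vertices) is in the right spirit, but the paper takes a genuinely different route from there, and the place where your proposal diverges is exactly where it has a gap you cannot close as stated.

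Two key differences. First, the paper's triangulation from Lemma~\ref{lem:triangulation} has mesh of size $\varepsilon$, with $\varepsilon$ taken \emph{small}, not of order $\diam(N)$ as you propose. Second, and more importantly, the paper never builds $n+1$ separate bump functions and then tries to merge them. It assigns to each vertex an integer in $\{0,\ldots,c_2\}$ so that the vertices of \emph{every} simplex carry pairwise distinct values, and piecewise-linearly interpolates to get a \emph{single} scalar function $h'$ on $N$. Because adjacent vertices carry distinct integer values (not merely distinct colour labels), $h'$ is a nonconstant affine function on every simplex, so its gradient is uniformly bounded below (of size $\sim \varepsilon^{-1}$ after transferring through $F$) at every point of $N$ that is \emph{not} a vertex; the cancellation you worry about simply does not arise away from the vertex set. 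After mollification this gives $w\bigl(N\setminus F(V(X))\bigr)\le \const_n\varepsilon$ (Proposition~\ref{prop:gizagiza}). The hard analytic content of the paper is then in Section~5: using a spanning tree of the 1-skeleton, a diffeomorphism from an open subset of $N\setminus F(V(X))$ onto $N\setminus\{F(v)\}$ is constructed with bi-Lipschitz distortion $\le \const_n\,\diam(N)\,\varepsilon^{-1}$ (Proposition~\ref{prop:hakidasu}); combining via Lemma~\ref{lem:gg'} makes the $\varepsilon$'s cancel, and choosing $v$ with $F(v)\notin K$ finishes the proof.

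The gap in your proposal is the combination step~4. There is no general way to extract a single scalar $h$ from the vector-valued map $F=(F_0,\ldots,F_n):N\to\R^{n+1}$ so that $|dh|\ge 1$ everywhere on $K$ while keeping $\|h\|\lesssim\diam(N)$: at a point where several colour stars overlap, the ``active'' direction in $\R^{n+1}$ varies with the point, and no fixed linear (or smooth) functional of $F$ will see it uniformly. Your suggestion to handle this by an iterative colour-by-colour modification is not what Sections~4--5 do, and I do not see how to make it converge without losing the oscillation bound. The paper's resolution is structural: it replaces ``combine the colour functions'' by ``work off the vertex set, then sweep all the holes to one point by a controlled diffeomorphism,'' so the only place where the gradient estimate fails is a finite point set, and that defect is absorbed by the diffeomorphism's distortion rather than repaired pointwise.
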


\begin{proof}[\textbf{\textit{Proof of Theorem \ref{thm:width_radius} modulo Theorem \ref{thm:closed}}}]
Let $M$ be a $n$-dimensional noncompact Riemannian manifold without boundary.
Our aim is to show $w(M) \le \const_n r(M)$, i.e.
any compact set $K$ on $M$ satisfies $w_M(K) \le \const_n r(M)$.
We may assume that $M$ is connected (the general case follows at once from this case).
The key step in the proof is the following claim:
\begin{quote}
For any $\varepsilon>0$, there exists an open neighborhood $W$ of $K$ in $M$, 
a closed, connected Riemannian manifold $N$, and an isometric embedding $i:W \to N$ such that
$\diam (N) \le (2+\varepsilon) r(M)$.
\end{quote}
Once the above claim is established, then we can complete the proof by
\[
w_M(K)= w_N \bigl(i(K)\bigr) \le \const_n \diam (N) \le \const_n r(M), 
\]
where we use Theorem \ref{thm:closed} in the first inequality. 

We prove the claim. 
It is easy to show that there exists $M'$, 
a connected compact submanifold of $M$ with boundary, 
satisfying $K \subset \interior M'$. 

Take an embedding $E: \partial M' \times [-2,1] \to M'$ such that
$E(x,1)=x$, $\Ima E \cap K = \emptyset$.

We set $W:= M' \setminus \Ima E$ and 
$N:= M' \times \{0\} \bigcup_\varphi M' \times \{1\}$, where $\varphi$ is defined by 
\[
\varphi: E(\partial M' \times [-1,1])  \times \{0\} \to E(\partial M' \times [-1,1]) \times \{1\}: \quad 
\bigl(E(u,t), 0 \bigr) \mapsto \bigl(E(u,-t), 1 \bigr).
\]
Then, $N$ is connected.
Moreover we define an embedding $j_0, j_1: M' \to N$ by 
\[
j_k: M' \cong M' \times \{k\} \hookrightarrow N \quad (k=0,1).
\]

For any $\varepsilon>0$, there exists a Riemannian metric $g_N$ on $N$ such that:
\begin{enumerate}
\item[(1)] $d^+(j_0:g_M|_{M'}, g_N) \le 1$. 
\item[(2)] $\diam (M', j_1^* g_N) \le \varepsilon r(M)$.
\item[(3)] $j_0^*g_N|_W \equiv g_M|_W$.
\end{enumerate}
Since $\sup_{x \in M} \dist_M(x, M \setminus M') \le r(M)$, (1) implies 
$\sup_{x \in N} \dist_N(x, \Ima j_1 ) \le r(M)$.
Then (2) implies $\diam (N) \le (2+\varepsilon) r(M)$.
Finally (3) implies that $j_0|_W:W \to N$ is an isometric embedding.
\end{proof}

\subsection{Sketch of a proof of Theorem \ref{thm:closed}}

The rest of this paper is devoted to a proof of Theorem \ref{thm:closed}. 
In this subsection, we sketch the proof, and explain the structure of the rest of this paper. 

For a Riemannian manifold $N$ and $\varepsilon>0$, we introduce a notion of 
\textit{good triangulation} of $N$ with respect to $\varepsilon$. 
Roughly speaking, a triangulation on $N$ is good with respect to $\varepsilon$ if each simplex has a diamter of order $\varepsilon$, 
and its thickness is bounded from below. A rigorous definition will be given in Section 3 (Definition \ref{defn:triangulation}). 

In Section 4, we prove that any Riemannian manifold $N$ has a good triangulation with respect to $\varepsilon$, when 
$0< \varepsilon < \varepsilon(N)$ ($\varepsilon (N)$ is a certain small number which depends on $N$).  
The proof is based on the notion of \textit{Delaunay triangulation}. 
Section 4 is independent from the other parts of this paper, and can be skipped at the first reading. 

In Section 5 and 6, we carry out main part of the proof of Theorem \ref{thm:closed}. 
It is based on the following two propositions:

\textbf{Proposition A:} 
Let $N$ be a $n$-dimensional closed Riemannian manifold, $T$ be a good triangulation of $N$ with respect to $\varepsilon>0$, and 
$V(T) \subset N$ denote the set of vertices of $T$. Then there holds
$w(N \setminus V(T)) \le \const_n \varepsilon$. 

\textbf{Proposition B:} 
For any $n$-dimensional connected closed Riemannian manifold $N$, there exists $\varepsilon(N)>0$ which satisfies the following property:
\begin{quote}
Let $T$ be a good triangulation of $N$ with respect to $\varepsilon \in (0, \varepsilon(N))$. 
Then, for any $v \in V(T)$, there exists an open set $W \subset N \setminus V(T)$ and a diffeomorphism $\Phi: W \to N \setminus \{v\}$ such that 
$d^+(\Phi) \le \const_n \diam(N) \varepsilon^{-1}$. 
\end{quote}

Proposition A is proved in Section 5, Proposition B is proved in Section 6. 
Once we have established Proposition A and B, proof of Theorem \ref{thm:closed} is almost immediate 
(details of this step are spelled out at the beginning of Section 6). 

\section{Good triangulation: definition}

We start with a review of the notion of triangulation.
A simplicial complex $X$ is a pair $\bigl(V(X), \Sigma(X)\bigr)$ where $V(X)$ is a set,  
$\Sigma(X) \subset \{\text{finite subsets of $V(X)$}\}$,
such that 
\begin{itemize}
\item For any $v \in V(X)$, $\{v\} \in \Sigma(X)$.
\item $\tau \subset \sigma, \sigma \in \Sigma(X) \implies \tau \in \Sigma(X)$.
\end{itemize}
For each $\sigma \in \Sigma(X)$, we define $\dim \sigma$ as $\dim \sigma:=\sharp \sigma - 1$, where $\sharp$ denotes the cardinality. 
For each integer $k \ge 0$, we define 
$\Sigma_k(X):=\{ \sigma \in \Sigma(X) \mid \dim \sigma =k\}$.
Moreover, we define a simplicial complex $X_k$ by 
$V(X_k)=V(X)$, $\Sigma(X_k):=\bigcup_{0 \le l \le k} \Sigma_l(X)$.
For each $v \in V(X)$, we define $N_X(v) \subset V(X)$ as 
\[
N_X(v):= \bigl\{ w \in V(X) \bigm{|} \{v,w\} \in \Sigma_1(X) \bigr\}.
\]
An element of $\Sigma(X)$ is called a \textit{simplex} of $X$.
$V(X), \Sigma(X)$ are sometimes abbreviated as $V, \Sigma$.

For any simplex $\sigma=\{v_0, \ldots, v_k\}$, 
we define $|\sigma|, \interior |\sigma| \subset \R[V]:= \bigoplus_{v \in V} \R \cdot v$ by
\[
|\sigma| := \Biggl\{ \sum_{0 \le j \le k} t_jv_j \Biggm{|} 0 \le t_j \le 1, \sum_j t_j= 1 \Biggr\}, \quad
\interior |\sigma|:= \Biggl\{ \sum_{0 \le j \le k} t_jv_j \Biggm{|} 0< t_j  \le 1, \sum_j t_j=1 \Biggr\}.
\]
We set $|X|:= \bigcup_{\sigma \in \Sigma} |\sigma| \subset \R[V]$.
For any $x \in |X|$, we define $\St_X(x) \subset |X|$ by 
$\St_X(x):= \bigcup_{x \in |\sigma|} \interior |\sigma|$. 
$\overline{\St}_X(x):=\bigcup_{x \in |\sigma|} |\sigma|$ is its closure. 

For any $\sigma \in \Sigma(X)$, we equip $|\sigma|$ with a restriction of the standard Riemannian metric on $\bigoplus_{v \in \sigma} \R \cdot v$. 
In this way, we can define a Riemannian metric on $|X|$, and we call it the \textit{standard metric} on $|X|$. 
This metric defines lengths of piecewise smooth paths on $|X|$. 
We also define a distance function on $|X|$ so that $\dist(x,y)$ to be the infimum of lengths of piecewise smooth paths 
connecting $x,y \in |X|$. 

We introduce some terminologies, following \cite{Munkres} Section 8.

\begin{defn}
Let $X$ be a simplicial complex, $N$ be a manifold, and $F:|X| \to N$.
\begin{enumerate}
\item[(1)] $F$ is of \textit{$C^r$-class} if $F|_{|\sigma|}$ is of $C^r$-class for any $\sigma \in \Sigma(X)$.
\item[(2)] When $F$ is of $C^1$-class, $F$ is \textit{nondegenerate} 
if, for any $\sigma \in \Sigma(X)$ satisfying $\dim \sigma \ge 1$,
$d(F|_{|\sigma|})$ has rank equal to $\dim \sigma$ everywhere on $|\sigma|$.
\item[(3)] When $F$ is of $C^1$-class, $(dF)_x: \overline{\St}_X(x) \to T_{F(x)}N$ is defined for each $x \in |X|$ by 
\[
(dF)_x(y):= (dF|_{|\sigma|})_x (y-x), 
\]
where $\sigma \in \Sigma(X)$ such that $x \in |\sigma|$,
and $y$ is an arbitrary point on $|\sigma|$.

\item[(4)] $F$ is a $C^r$-immersion if it is of $C^r$-class and $(dF)_x: \overline{\St}_X(x) \to T_{F(x)}N$ is injective for any $x \in |X|$.
\item[(5)] $(X,F)$ is a $C^r$-triangulation of $N$, if $F$ is a $C^r$-immersion and a homeomorphism.
\end{enumerate}
\end{defn}
\begin{rem}\label{rem:munkres}
Assume that $F:|X| \to N$ is of $C^1$-class.
It is known that $(X,F)$ is a triangulation if 
$F$ is a nondegenerate homeomorphism (Theorem 8.4 in \cite{Munkres}).
\end{rem}

Now we explain the definition of good triangulation. 

\begin{defn}\label{defn:triangulation}
Let $N$ be a $n$-dimensional closed Riemannian manifold.
A triangulation 
$(X,F)$ of $N$ is said to be \textit{good} with respect to $\varepsilon, c_0, c_1, c_2, c_3>0$, 
if it satisfies the following properties: 
\begin{enumerate}
\item[(1)] For any $\sigma \in \Sigma_1(X)$, $F|_{|\sigma|}:|\sigma| \to N$ is a geodesic.
\item[(2)] For any $\sigma \in \Sigma(X)$, $d^+(F|_{|\sigma|}) \le c_1 \varepsilon$ and
$d^-(F|_{|\sigma|}) \ge c_0\varepsilon$, where $|\sigma|$ is equipped with the standard metric. 
\item[(3)] For any $u \in V(X)$, $\sharp N_X(u) \le c_2$.
\item[(4)] For any $u \in V(X)$ and $v,w \in N_X(u)$ such that $v \ne w$, let $\theta$ denote the angle 
between $\overrightarrow{F(u)F(v)}$ and $\overrightarrow{F(u)F(w)}$. Then $\tan (\theta/2) > c_3$. 
\end{enumerate}
\end{defn}
\begin{rem}
It is possible to show that if a triangulation satisfies the above condition (1) and (2), it also satisfies
(3) and (4) with respect to some $c_2$ and $c_3$, which depend on $c_0$ and $c_1$. 
However, it is simpler to verify conditions (3), (4) directly from the construction which we will give in Section 4. 
\end{rem}

The following lemma asserts the existence of a good triangulation. 

\begin{lem}\label{lem:triangulation}
For each integer $n$, there exist constants $c_0, c_1, c_2, c_3$ depending only on $n$, such that the following statement holds:
\begin{quote}
Let $N$ be a $n$-dimensional closed Riemannian manifold. 
When $\varepsilon$ is sufficiently small (i.e. $0<\varepsilon < \varepsilon(N)$), there exists a good triangulation of $N$ 
with respect to $\varepsilon, c_0, c_1, c_2, c_3$. 
\end{quote} 
\end{lem}

Lemma \ref{lem:triangulation} is proved in Section 4. 

\section{Good triangulation: existence}

This section is devoted to the proof of Lemma \ref{lem:triangulation}.
Our idea of proof is to use the notion of \textit{Delaunay triangulation}, 
which is well-known in computational geometry
(see e.g \cite{BCKO}). 

In this section, we define (and prove the existence of) Delaunay triangulation of Riemannian manifolds for point sets satisfying appropriate 
nondegenerate conditions. 
Section 4.1 concerns the case when the manifold is the Euclidean space. 
In Section 4.2, we spell out the nondegenerate conditions on general Riemannian manifolds. 
In Section 4.3, we define Delaunay triangulation on arbitrary closed Riemannian manifolds, and prove Lemma \ref{lem:triangulation} as an 
immediate consequence.  

\subsection{Delaunay triangulation on the Euclidean space}

First we introduce some conditions on subsets of $\R^n$.
Let $S$ be a subset of $\R^n$, and $a,b,c,d$ be positive real numbers.
We define conditions $P_1(a)$, $P_2(b)$, $P_3(c)$, $P_4(d)$ for $S$ as follows:
\begin{itemize}
\item $S$ satisfies $P_1(a)$ $\iff$ Any $s,t \in S, s \ne t$ satisfy $|s-t| \ge a$. 
\item $S$ satisfies $P_2(b)$ $\iff$ Any $x \in \R^n$ satisfies $B^n(x:b) \cap S \ne \emptyset$. 
\item $S$ satisfies $P_3(c)$ $\iff$ Any $x \in \R^n$ and $0<r<c$ satisfy $\sharp(S^{n-1}(x:r) \cap S) \le n+1$.
\item $S$ satisfies $P_4(d)$ $\iff$ Any $s_0, \ldots, s_n \in S$ satisfying 
$\diam \{s_0,\ldots,s_n\} \le d$ are nondegenerate (i.e. there exists no hyperplane in $\R^n$ which contains
$s_0, \ldots, s_n$).
\end{itemize}

For any $s \in S$, define $\Vo(s) \subset \R^n$ by 
\[
\Vo(s):= \bigl\{ x \in \R^n \bigm{|} \text{$|x-s| \le |x-s'|$ for any $s' \in S$} \bigr\}.
\] 
It is called a \textit{Volonoi domain}. 
We define a simplicial complex $X_S$ by $V(X_S)=S$, 
\[
\Sigma(X_S):= \bigl\{ \{s_0,\ldots,s_k\} \subset S \bigm{|} \Vo(s_0) \cap \cdots \cap \Vo(s_k) \ne \emptyset \bigr\}.
\]
Moreover, define $F_S:|X_S| \to \R^n$ so that 
$F_S|_{|\sigma|}$ is an affine map for any $\sigma \in \Sigma(X_S)$.
When $(X_S, F_S)$ is a triangulation of $\R^n$, we call it a \textit{Delaunay triangulation} for $S$. 

\begin{thm}\label{thm:Delaunay}
If $S \subset \R^n$ satisfies $P_1(a)$, $P_2(b)$, $P_3(c)$, $P_4(d)$ and 
$c>b$, $d>2b$, then $(X_S,F_S)$ is a triangulation of $\R^n$.
\end{thm}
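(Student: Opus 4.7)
The plan is to execute the classical ``paraboloid lifting'' argument, identifying the Delaunay complex $X_S$ with the lower face lattice of the convex hull of the lifted point set $\phi(S) \subset \R^{n+1}$, where $\phi(y) := (y, |y|^2)$, and then transferring the simplicial structure back down to $\R^n$ via the projection $\pi$ onto the first $n$ coordinates.

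First I would unpack the hypotheses. For any $\sigma = \{s_0, \ldots, s_k\} \in \Sigma(X_S)$, a witness $x_0 \in \bigcap_i V(s_i)$ realizes $r := \dist(x_0, S) = \dist(x_0, s_i)$, so all $s_i$ lie on the sphere $S^{n-1}(x_0, r)$ of radius $r \le b < c$ (using $P_2(b)$). Hence $P_3(c)$ forces $k+1 \le n+1$, while $\diam \sigma \le 2r \le 2b < d$ combined with $P_4(d)$ makes $s_0, \ldots, s_k$ affinely independent. Consequently $F_S$ is a nondegenerate map of $C^\infty$-class (being affine on each simplex), so by remark \ref{rem:munkres} it remains only to show that $F_S$ is a homeomorphism onto $\R^n$.

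Next I would study the convex set $K := \conv(\phi(S)) \subset \R^{n+1}$. A direct computation shows that, for each $x_0 \in \R^n$ with $r := \dist(x_0, S)$, the non-vertical hyperplane $H_{x_0, r} := \{(y, z) \mid z = 2\langle x_0, y\rangle + r^2 - |x_0|^2\}$ supports $K$ from below and meets $\phi(S)$ exactly in $\phi(\sigma(x_0))$, where $\sigma(x_0) := \{s \in S \mid \dist(x_0, s) = r\} \in \Sigma(X_S)$; conversely every non-vertical supporting hyperplane of $K$ from below is of this form. Using $P_2(b)$ to see that $\conv(S) = \R^n$, combined with the quadratic growth of the paraboloid, the lower boundary $\partial_- K$ is the graph of a continuous convex function $f: \R^n \to \R$, and the vertical projection $\pi: \partial_- K \to \R^n$ is a homeomorphism.

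The final step is to check that each closed lower face of $K$ is genuinely a simplex matching some $|\sigma|$ under projection: its vertex set equals $\phi(\sigma)$ for some $\sigma \in \Sigma(X_S)$, and these $k+1$ points are affinely independent in $\R^{n+1}$ because their $\pi$-images are affinely independent in $\R^n$ by the first step. Hence the decomposition of $\partial_- K$ into lower faces is combinatorially isomorphic to $X_S$, and composing the resulting simplicial isomorphism $|X_S| \to \partial_- K$ with $\pi|_{\partial_- K}$ reproduces $F_S$, so $F_S$ is a homeomorphism. The main obstacle is precisely this ``every lower face is a simplex'' step, where $P_3(c)$ and $P_4(d)$ with the quantitative margins $c > b$ and $d > 2b$ are indispensable; a secondary technical issue is verifying that $\pi|_{\partial_- K}$ really is a homeomorphism, which requires the quadratic paraboloid growth together with $P_2(b)$ so that every vertical line meets $\partial_- K$ in exactly one point.
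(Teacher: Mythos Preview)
Your argument is correct and follows the classical paraboloid--lifting characterisation of the Delaunay complex; this is a genuinely different route from the paper.  After the same nondegeneracy step, the paper proves that $F_S$ is a homeomorphism by three separate direct arguments: properness (using $P_1(a)$ to bound the number of simplices meeting any compact set), injectivity (for two $n$-simplices $\sigma,\tau$ one uses $P_2(b)$ and $P_3(c)$ with $c>b$ to identify $\sigma = S\cap S^{n-1}(p,d_p)$ and $\tau = S\cap S^{n-1}(q,d_q)$ for their circumscribed spheres, and then compares the two spheres), and surjectivity (showing $F_S(|X_S|)$ is open along every $(n-1)$-face and invoking a connectedness argument on lines in $\R^n$).

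Your approach is more conceptual and instantly explains why every face is a simplex and why they glue to a manifold, at the price of importing the convex-hull machinery for infinite, locally finite point sets (the two technical points you flag are exactly the ones that need care).  The paper's hands-on approach, by contrast, is set up with the next section in mind: there the Delaunay construction is carried out on a closed Riemannian manifold (theorem~\ref{thm:Delaunay2}), where no paraboloid lifting is available, and the proof proceeds by rescaling and reducing locally to the Euclidean statement.  Your lifting argument would not transfer to that curved setting, so if you adopt it here you will still need something like the paper's direct method later.
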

\begin{proof}
By Remark \ref{rem:munkres}, it is enough to show that $F_S$ is nondegenerate and 
homeomorphism.
First we show the nondegeneracy. 
If $s,t \in S$ satisfies $\Vo(s) \cap \Vo(t) \ne \emptyset$, then $P_2(b)$ implies that 
$|s-t| \le 2b$. 
Hence any $\sigma \in \Sigma_n(X_S)$ satisfies $\diam(\sigma) \le 2b$. 
Since $d>2b$ and 
$S$ satisfies $P_4(d)$, $\sigma$ is nondegenerate, hence 
$F_S|_{|\sigma|}$ is nondegenerate. 

We have to show that $F_S$ is a homeomorphism.
Since $F_S$ is clearly continuous, it is enough to show its properness, injectivity, and surjectivity.

We show the properness of $F_S$.
Let $K$ be a compact set on $\R^n$, 
and assume that $\sigma \in \Sigma_n(X_S)$ 
satisfies $F_S(|\sigma|) \cap K \ne \emptyset$.
Then, $\diam(\sigma) \le 2b$ implies that 
$\sigma \subset B^n(K:2b):= \bigcup_{x \in K} B^n(x:2b)$.
Hence $F_S^{-1}(K)$ is contained in
$\bigcup_{\sigma \subset B^n(K:2b)} |\sigma|$.
On the other hand, since $S$ satisfies $P_1(a)$, 
$S \cap B^n(K:2b)$ is finite.
Hence $\bigcup_{\sigma \subset B^n(K:2b)} |\sigma|$ is compact.

We show the injectivity of $F_S$.
Arguing indirectly, suppose that there exist $x,y \in |X_S|$ such that $x \ne y$ and 
$F_S(x)=F_S(y)$.
Take $\sigma, \tau \in \Sigma_n(X_S)$ such that $x \in |\sigma|$, $y \in |\tau|$.
Since $F_S|_{|\sigma|}$, $F_S|_{|\tau|}$ are injective,
$\sigma \ne \tau$.
Define $p,q \in \R^n$ and $d_p, d_q >0$ by 
$\{p\}:=\bigcap_{s \in \sigma} \Vo(s)$, $d_p:= \dist(p,S)$, 
$\{q\}:=\bigcap_{t \in \tau} \Vo(t)$, $d_q:=\dist(q,S)$. 
Since $S$ satisfies $P_2(b)$, we get $d_p, d_q \le b$.
Then, since $c>b$ and 
$S$ satisfies $P_3(c)$, 
we get $\sigma = S \cap S^{n-1}(p:d_p)$, $\tau= S \cap S^{n-1}(q:d_q)$.

If $S^{n-1}(p:d_p) \cap S^{n-1}(q:d_q)=\emptyset$, 
then $F_S\bigl(|\sigma|\bigr) \cap F_S\bigl(|\tau|\bigr)=\emptyset$: a 
contradiction.
If $S^{n-1}(p:d_p) \cap S^{n-1}(q:d_q) \ne \emptyset$, one can easily show that 
$x,y \in |\sigma \cap \tau|$. 
However it contradicts the fact that $F_S|_{|\sigma|}, F_S|_{|\tau|}$ are injective.

Finally we show the surjectivity, i.e. $F_S(|X_S|) = \R^n$.
First we claim that for any $x \in \bigcup_{\dim \sigma \ge n-1} \interior |\sigma|$, $F_S(|X_S|)$ is a neighborhood 
of $F_S(x)$.
Let $\sigma$ be a unique simplex of $X_S$ such that $x \in \interior |\sigma|$. 
If $\dim \sigma =n$, then the claim is obvious since $F_S|_{|\sigma|}$ is nondegenerate.
If $\dim \sigma =n-1$, there exists a unique hyperplane $\pi \subset \R^n$, which contains $\sigma$.
$\pi$ divides $\R^n$ into two halfspaces $H_1, H_2$, and 
there exist $s_1 \in H_1 \cap S$, $s_2 \in H_2 \cap S$ such that
$\sigma \cup \{s_1\}, \sigma \cup \{s_2\} \in \Sigma_n(X_S)$.
Then $F_S\bigl(|\sigma \cup \{s_1\}| \cup |\sigma \cup \{s_2\}|\bigr)$ is a neighborhood of 
$F_S(x)$.

Assume that $F_S(|X_S|) \subsetneq \R^n$, and take $x \in \R^n \setminus F_S(|X_S|)$. 
Then, there exists $e \in S^{n-1}$ such that
$(x+e\R) \cap F_S(|X_S|) \ne \emptyset$, and 
$(x+e\R) \cap \bigcup_{\dim \sigma \le n-2} F_S(|\sigma|) = \emptyset$.
Then, the above claim shows that 
$T=\{t \in \R \mid x+et \in F_S(|X_S|) \}$ is open in $\R$.
Moreover, the properness of $F_S$ shows that $T$ is closed.
Finally, obviously $T \ne \emptyset$, $0 \notin T$: a contradiction.
\end{proof}

\subsection{Nondegenerate conditions}
Our idea to prove Lemma \ref{lem:triangulation} is to generalize the notion of Delaunay triangulation for 
finite sets on closed Riemannian manifolds. 
To carry out this idea, 
we consider finite sets on manifolds which satisfy appropriate nondegenerate conditions. 
In this subsection, we spell out those nondegenerate conditions, and show 
the existence of finite sets on manifolds  satisfying those conditions (Lemma \ref{lem:S}).

Let $N$ be a closed Riemannian manifold. 
For $k=1,\ldots,n$ and distinct points 
$x_0, \ldots, x_k \in N$ satisfying  
$\diam \{x_0,\ldots,x_k\} < \inj(N)$, we define 
$\theta_k(x_0,\ldots,x_k)$ as follows  
(recall that we have set in Section 1.3 $e_{pq}:=\overrightarrow{pq}/|\overrightarrow{pq}|$): 
\[
\theta_k(x_0, \ldots, x_k):=\inf_ \sigma 
\vol(e_{x_{\sigma(0)}x_{\sigma(1)}}, \ldots, e_{x_{\sigma(0)}x_{\sigma(k)}}),
\]
where $\sigma$ runs over all permutations of $\{0,\ldots,k\}$, and
\[
\vol(v_1,\ldots,v_k):= \sqrt{\det(v_i \cdot v_j)_{1 \le i,j \le k}} .
\]
Let $S$ be a finite subset of $N$, and 
$a, b, c, d, \delta, \theta$ are positive numbers, and $k=2,\ldots,n$:
\begin{itemize}
\item $S$ satisfies $P'_1(a)$ $\iff$ Any $s,t \in S$, $s \ne t$ satisfy $\dist_N(s,t) \ge a$.
\item $S$ satisfies $P'_2(b)$ $\iff$ Any $x \in N$ satisfies $B_N(x:b) \cap S \ne \emptyset$.
\item $S$ satisfies $P'_3(c,\delta)$ $\iff$ Any $x \in N$ and $0< r < c$ satisfy
$\sharp \bigl( S \cap B_N(x: (1-\delta)r, (1+\delta)r) \bigr) \le n+1$.
\item $S$ satisfies $P'_4(k,d,\theta)$ $\iff$ Any 
$s_0, \ldots, s_k \in S$ with $\diam \{s_0, \ldots,s_k\} \le d$ satisfies 
$\theta_k(s_0, \ldots,s_k) \ge \theta$.
\end{itemize}

\begin{lem}\label{lem:S}
There exist positive numbers $\delta,\theta$ depending only on $n$, which satisfy the followng: 
if $\varepsilon>0$ is sufficiently small, then there exists a finite set $S \subset N$ which satisfies 
$P'_1(\varepsilon/2)$, $P'_2(2\varepsilon)$, $P'_3(5\varepsilon, \delta)$, $P'_4(n,10\varepsilon,\theta)$.
\end{lem}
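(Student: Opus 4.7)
The plan is to obtain $S$ as a sequentially chosen perturbation of a maximal $\varepsilon$-separated net. Fix $S_0 = \{s_1, \ldots, s_N\} \subset N$ a maximal $\varepsilon$-separated subset; this is finite since $N$ is compact, and by maximality $B_N(x,\varepsilon) \cap S_0 \neq \emptyset$ for every $x$, so $S_0$ satisfies $P'_1(\varepsilon)$ and $P'_2(\varepsilon)$. I would then inductively define $s'_k := \exp_{s_k}(\xi_k)$ for suitable $\xi_k \in T_{s_k}N$ with $|\xi_k| \le \varepsilon/10$ (well-defined for $\varepsilon \le \inj(N)/10$). Any such choice preserves the first two conditions with slack: since $(1-2/10)\varepsilon > \varepsilon/2$, $P'_1(\varepsilon/2)$ holds; and since $(1+1/10)\varepsilon < 2\varepsilon$, $P'_2(2\varepsilon)$ holds. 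The remaining task is to choose the $\xi_k$'s so that $P'_3(10\varepsilon,\delta)$ and $P'_4(n, 10\varepsilon, \theta)$ hold for $S := \{s'_1, \ldots, s'_N\}$.

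The key observation is that $P'_3$ and $P'_4$ are preserved under ``completion'': if $\{s'_1, \ldots, s'_{k-1}\}$ satisfies them, any new violation after adding $s'_k$ must involve $s'_k$. So at step $k$ I need only avoid, for each $(n+1)$-subset $T$ of already-placed points lying at distance $\le 25\varepsilon$ from $s_k$, the set of $\xi_k$ for which $T \cup \{s'_k\}$ fits in some spherical annulus of radius $\le 10\varepsilon$ and relative thickness $\le \delta$; and for each $n$-subset $T$ similarly located, the set of $\xi_k$ for which $\theta_n(T \cup \{s'_k\}) < \theta$. In exponential coordinates about $s_k$, where for $\varepsilon$ small the metric is $O(\varepsilon^2)$-close to Euclidean, standard Euclidean estimates show that (provided the points of $T$ are not themselves too degenerate) these bad sets are respectively contained in an $O(\delta\varepsilon)$-tube about a smooth sphere and an $O(\theta\varepsilon)$-tube about an affine hyperplane, hence have Lebesgue measure $\le C_n\delta\varepsilon^n$ and $\le C_n\theta\varepsilon^n$ respectively.

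Since $P'_1(\varepsilon)$ forces a packing bound, there is a constant $K_n$ such that at most $K_n$ points of $S_0$ lie within $25\varepsilon$ of $s_k$, so there are at most $M_n := \binom{K_n}{n} + \binom{K_n}{n+1}$ bad subsets to avoid. The ball of allowed $\xi_k$ has volume of order $\varepsilon^n$, so the total measure of bad $\xi_k$ is at most $C'_n(\delta+\theta)\varepsilon^n$, strictly smaller than that volume once $\delta,\theta$ are chosen small enough depending only on $n$. Hence a valid $\xi_k$ exists at each step, completing the induction. The main technical obstacle is that the ``not too degenerate'' caveat on the previous points forces one to strengthen the inductive hypothesis, maintaining lower bounds $\theta_j(T) \ge \theta^*_j$ for every $(j+1)$-subset $T$ of small diameter and every $j = 2, \ldots, n$ (with $\theta^*_n := \theta$, and the smaller $\theta^*_j$'s chosen to propagate the estimates). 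Under this strengthened induction, the quantitative measure bounds above become clean via a transversality/compactness argument on the space of normalized configurations in $\R^n$, yielding constants depending only on $n$.
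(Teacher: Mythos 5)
Your proposal matches the paper's proof essentially step for step: perturb a maximal $\varepsilon$-separated net greedily, point by point, inside balls of radius $\varepsilon/10$, showing at each stage that the ``bad'' perturbations violating the $P'_3$ and $P'_4$ conditions have small relative measure, and you correctly anticipate that the induction must be strengthened to carry lower bounds $\theta_j$ on $\theta_j(\cdot)$ for all $j=2,\ldots,n$ (the paper's $\theta_2,\ldots,\theta_n$) rather than just $j=n$. Your geometric description of the bad sets as thin tubes around spheres and hyperplanes spells out the volume estimates the paper simply asserts, but the underlying argument is the same.
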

\begin{rem}
We abbreviate 
$P'_1(\varepsilon/2) \wedge P'_2(2\varepsilon) \wedge P'_3(5\varepsilon,\delta)\wedge P'_4(n,10\varepsilon,\theta)$ 
as $P'(\varepsilon, \delta,\theta)$. 
\end{rem}
\begin{proof}
We prove the following slightly stronger result:
\begin{quote}
There exist positive numbers $\delta,\theta_2, \ldots, \theta_n$ depending only on $n$, which satisfy the following: 
if $\varepsilon>0$ is sufficiently small, then there exists a finite set $S \subset N$ which satisfies 
$P'_1(\varepsilon/2)$, $P'_2(2\varepsilon)$, $P'_3(5\varepsilon, \delta)$, 
$P'_4(k,10\varepsilon,\theta_k)\,(k=2,\ldots,n)$.
\end{quote}

First note that if $S \subset N$ is a maximal set which satisfies $P'_1(\varepsilon)$, then 
it also satisfies $P'_2(\varepsilon)$.
Hence, for any $\varepsilon>0$, there exists $S \subset N$ which satisfies $P'_1(\varepsilon)$, $P'_2(\varepsilon)$.

Take $S=\{s_1,\ldots,s_m\} \subset N$ so that it satisfies $P'_1(\varepsilon)$, $P'_2(\varepsilon)$. 
Then, any $S'=\{s'_1,\ldots,s'_m\} \subset N$ such that 
$s'_i \in B_N(s_i: \varepsilon/10)\,(i=1,\ldots,m)$ 
satisfies $P'_1(4\varepsilon/5)$, $P'_2(11\varepsilon/10)$
(hence it obviously satisfies $P'_1(\varepsilon/2)$, $P'_2(2\varepsilon)$).

We show that there exist $\delta, \theta_2, \ldots, \theta_n$, such that 
for sufficiently small $\varepsilon>0$
we can take $S'$ so that it satisfies $P'_3(5\varepsilon, \delta)$, 
$P'_4(2,10\varepsilon,\theta_2), \ldots, P'_4(n,10\varepsilon,\theta_n)$.

We construct $S'$ inductively. 
First, let $s'_1:=s_1$. 
Suppose that we have chosen $s'_1, \ldots, s'_l$ so that 
$\{s'_1,\ldots,s'_l\}$ satisfies $P'_3(5\varepsilon, \delta), P'_4(2,10\varepsilon,\theta_2), \ldots, P'_4(n,10\varepsilon,\theta_n)$.

We define $W_2,\ldots,W_n, Z \subset B_N(s_{l+1}:\varepsilon/10)$ as follows:
\begin{itemize}
\item $W_k$ is the set of $x \in B_N(s_{l+1}:\varepsilon/10)$ such that
$\{s'_1, \ldots, s'_l, x\}$ does not satisfy $P'_4(k,10\varepsilon, \theta_k)$.
\item $Z$ is the set of $x \in B_N(s_{l+1}:\varepsilon/10)$ such that 
$\{s'_1, \ldots, s'_l, x\}$ does not satisfy $P'_3(5\varepsilon, \delta)$.
\end{itemize}
Then, $\vol(W_2), \ldots, \vol(W_n), \vol(Z)$ are estimated as follows: 
\begin{itemize}
\item For any $c>0$, there exists $\Theta_2>0$ (depending on $c$) and $E>0$ (depending on $c, \Theta_2, N$) 
such that: if $\theta_2 \le \Theta_2$ and $\varepsilon \le E$, then 
$\vol(W_2)/\vol\bigl(B_N(s_{l+1}:\varepsilon/10)\bigr) \le c$.
\item For any $c>0$ and $k=3,\ldots,n$, there exists $\Theta_k>0$ (depending on $c, \theta_{k-1}$)
and $E'>0$ (depending on $c, \theta_{k-1}, \Theta_k, N$) 
such that: 
if $\theta_k \le \Theta_k$ and $\varepsilon \le E'$, then 
$\vol(W_k)/\vol\bigl(B_N(s_{l+1}:\varepsilon/10)\bigr) \le c$.
\item For any $c>0$, there exists $\Delta>0$ (depending on $c$, $\theta_n$)
and $E''>0$ (depending on $c$, $\theta_n$, $\Delta$, $N$)
such that:
if $\delta \le \Delta$ and $\varepsilon \le E''$, then
$\vol(Z)/\vol\bigl(B_N(s_{l+1}:\varepsilon/10)\bigr) \le c$.
\end{itemize}
Therefore, when $\theta_2, \ldots, \theta_n, \delta$ are properly chosen and $\varepsilon>0$ is sufficiently small, 
then $\vol(W_2 \cup \cdots \cup W_n \cup Z)/\vol\bigl(B(s_{l+1}:\varepsilon/10)\bigr) <1$. 
Therefore we can take $s'_{l+1} \in B_N(s_{l+1}:\varepsilon/10)$ such that 
$\{s'_1, \ldots, s'_{l+1}\}$ satisfies 
$P'_3(5\varepsilon, \delta)$,  
$P'_4(2,10\varepsilon,\theta_2), \ldots, P'_4(n,10\varepsilon,\theta_n)$.
Continuing this process until $l+1=m$, we can construct $S'$ which satisfies
$P'(\varepsilon, \delta, \theta_2, \ldots, \theta_n)$.
\end{proof}

\subsection{Delaunay triangulations on Riemannian manifolds}
In this subsection, 
we define Delaunay triangulation 
for nondegenerate finite sets on arbitrary Riemannian manifolds, 
and prove Lemma \ref{lem:triangulation} as an immediate consequence. 

Let $\varepsilon, \delta, \theta$ be positive numbers as in Lemma \ref{lem:S}, 
$N$ be a closed Riemannian manifold, and 
$S$ be a finite set on $N$ which satisfies $P'(\varepsilon, \delta, \theta)$. 
First we construct a simplicial complex $X_S$ and a smooth map $F_S: |X_S| \to N$, and then prove that 
$(X_S,F_S)$ is a good triangulation, when $\varepsilon$ is sufficiently small (Theorem \ref{thm:Delaunay2}).

$X_S$ is defined in exactly the same way as in the case of the Euclidean space (Section 4.1).
For each $s \in S$, we define $\Vo(s) \subset N$ by 
\[
\Vo(s):= \bigl\{ x \in N \bigm{|} \dist_N(s,x) = \dist_N(S,x) \bigr\}.
\]
Then, we define a simplicial complex $X_S$ by $V(X_S)=S$, and 
\[
\Sigma(X_S):= \bigl\{ \{s_0, \ldots, s_k\} \subset S \bigm{|} \Vo(s_0) \cap \cdots \cap \Vo(s_k) \ne \emptyset \bigr\}.
\]
Since $S$ satisfies $P'_2(2\varepsilon)$, 
any $\sigma \in \Sigma(X_S)$ satisfies 
$\diam (\sigma) \le 4\varepsilon$.

Next we define $F_S:|X_S| \to N$. The definition consists of three steps. 

\textbf{Step 1:}
For any $s \in S$, there exists a unique map $i_s: \overline{\St}_{X_S}(s) \to T_sN$ such that 
\begin{itemize}
\item $i_s(t) = \overrightarrow{st}$ for any $t \in N_{X_S}(s)$.
\item For any $\sigma \in \Sigma(X_S)$ such that $s \in \sigma$, $i_s|_{|\sigma|}$ is an affine map.
\end{itemize}
We define $F_s: \overline{\St}_{X_S}(s) \to N$ by 
$F_s:= \exp_s \circ i_s$ .

\textbf{Step 2:}
For each $k \ge 1$ we define 
\[
\mu_k: \bigl\{(q_0,\ldots,q_k)\in N^{k+1} \bigm{|} \diam\{q_0,\ldots,q_k\} < \inj(N) \bigr\} \times \Delta^k \to N,
\]
where 
\[
\Delta^k:= \bigl\{(t_0,\ldots,t_k) \bigm{|} t_0, \ldots, t_k \ge 0, t_0+\cdots+t_k=1 \bigr\}.
\]
Take an arbitrary function $\rho:[0,1] \to [0,1]$, such that 
$\rho \equiv 0$ on some neighborhood of $0$, and 
$\rho \equiv 1$ on some neighborhood of $1$.

Recall that for any points $p,q \in N$ satisfying $\dist_N(p,q)<\inj(N)$, $\gamma_{pq}$ denotes the shortest geodesic from $p$ to $q$ (see Section 1.3). 
When $k=1$, we define $\mu_1$ by 
\[
\mu_1\bigl(q_0, q_1, (t_0, t_1)\bigr) := \gamma_{q_0 q_1}\bigl(\rho(t_1)\bigr).
\]
Suppose that we have defined $\mu_1, \ldots, \mu_{k-1}$. Then, we define $\mu_k$ by 
\[
\mu_k\bigl(q_0, q_1, \ldots, q_k, (t_0,\ldots,t_k)\bigr):=
\begin{cases}
&q_k \qquad\qquad\qquad\qquad\qquad\quad(t_0=\cdots=t_{k-1}=0, t_k=1)\\
&\mu_1(\mu_{k-1}(q_0,\ldots,q_{k-1},\tfrac{t_0}{1-t_k}, \ldots,\tfrac{t_{k-1}}{1-t_k}), t_k) 
\quad (\text{otherwise})
\end{cases}.
\]
As is clear from the definition, $\mu_k$ is $C^\infty$.

\textbf{Step 3:}
Fix an arbitrary order on $S$. 
Then, for any $\sigma=\{s_0, \ldots, s_k\} \in \Sigma(S)$, where  
$s_0 < \cdots < s_k$, we define 
$F_S|_{|\sigma|}$ by 
\[
F_S(x):= \mu_k\bigl(F_{s_0}(x), \ldots, F_{s_k}(x), (t_0, \ldots, t_k) \bigr) \qquad
(x=s_0t_0 + \cdots + s_k t_k, \, t_0 + \cdots + t_k=1 \bigr).
\]
This completes the definition of $F_S:|X_S| \to N$.
When $(X_S, F_S)$ is a triangulation of $N$, we call it a \textit{Delaunay triangulation} for $S$. 

We need the following lemma for later arguments:

\begin{lem}\label{lem:later}
\begin{enumerate}
\item[(1)] $F_S|_{|\sigma|}$ is a geodesic for any $\sigma \in \Sigma_1(X_S)$.
\item[(2)] When $\varepsilon$ is sufficiently small, $F_S(|\sigma|) \subset B_N(\sigma,8\varepsilon)$ for any $\sigma \in \Sigma(X_S)$.
\end{enumerate}
\end{lem}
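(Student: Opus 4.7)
My plan is to unwind the recursive definitions of $F_S$ and $\mu_k$ in both parts. For (1), I would fix $\sigma=\{s_0,s_1\}$ with $s_0<s_1$ and write a point $x\in|\sigma|$ as $x=t_0s_0+t_1s_1$ with $t_0+t_1=1$. Since $i_{s_0}|_{|\sigma|}$ is affine with $i_{s_0}(s_0)=0$ and $i_{s_0}(s_1)=\overrightarrow{s_0s_1}$, one has $i_{s_0}(x)=t_1\overrightarrow{s_0s_1}$, so $F_{s_0}(x)=\exp_{s_0}(t_1\overrightarrow{s_0s_1})=\gamma_{s_0s_1}(t_1)$. The symmetric computation gives $F_{s_1}(x)=\gamma_{s_1s_0}(t_0)=\gamma_{s_0s_1}(t_1)$, so $F_{s_0}(x)=F_{s_1}(x)$. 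Since the shortest geodesic from a point to itself is constant, $\mu_1\bigl(F_{s_0}(x),F_{s_1}(x),(t_0,t_1)\bigr)$ collapses to $F_{s_0}(x)=\gamma_{s_0s_1}(t_1)$, which is a geodesic parametrization of the edge joining $s_0$ and $s_1$.

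For (2), I would first observe that each $F_{s_j}(x)$ lies in $B_N(\sigma,4\varepsilon)$: on $|\sigma|$, $i_{s_j}(x)=\sum_l t_l\overrightarrow{s_js_l}$ is a convex combination of vectors of length at most $\diam(\sigma)\le 4\varepsilon$ (by $P'_2(2\varepsilon)$), so $|i_{s_j}(x)|\le 4\varepsilon<\inj(N)$ for small $\varepsilon$, giving $\dist_N\bigl(F_{s_j}(x),s_j\bigr)\le 4\varepsilon$. Then I would prove that $\mu_k(q_0,\ldots,q_k,t)\in B_N(\sigma,8\varepsilon)$ whenever $q_0,\ldots,q_k\in B_N(\sigma,4\varepsilon)$ by comparison with the Euclidean model. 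In $\R^n$, the recursion defining $\mu_k$ unwinds to an honest convex combination $\sum_l\lambda_lq_l$ with $\lambda_l\ge 0$ and $\sum\lambda_l=1$ (each step being a linear interpolation with coefficient $\rho(t_k)\in[0,1]$); picking $l^*$ with $\lambda_{l^*}\ge 1/(n+1)$ maximal yields
\[
\Bigl|\sum_l\lambda_lq_l-s_{l^*}\Bigr|\le\sum_l\lambda_l\bigl(|q_l-s_l|+|s_l-s_{l^*}|\bigr)\le 4\varepsilon+(1-\lambda_{l^*})\cdot 4\varepsilon\le 4\varepsilon\bigl(1+\tfrac{n}{n+1}\bigr)<8\varepsilon,
\]
with strict slack $4\varepsilon/(n+1)$. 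On the Riemannian side every geodesic in the $\mu_k$ recursion has length $O(\varepsilon)\ll\inj(N)$, so standard curvature estimates give an $O(\varepsilon^3)=o(\varepsilon)$ deviation from the Euclidean model (read in normal coordinates around a fixed vertex of $\sigma$), and for sufficiently small $\varepsilon$ this is absorbed by the slack above.

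The main technical obstacle is controlling this compounded Euclidean-vs-Riemannian deviation after iterating the $\mu_k$ recursion $k\le n$ times. I expect this to reduce to a routine induction on $k$: each single geodesic interpolation perturbs its Euclidean counterpart by $O(\varepsilon^3)$, and because every intermediate point $\mu_j$ stays in a bounded neighborhood of $\sigma$ by the inductive bound, the errors accumulate only to $O(\varepsilon^3)$ in total, which is negligible compared to the $4\varepsilon/(n+1)$ margin.
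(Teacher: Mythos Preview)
Your argument for (1) is essentially the paper's: both compute $F_{s_0}(x)=F_{s_1}(x)=\gamma_{s_0s_1}(t_1)$ and use that $\mu_1$ applied to two coincident points returns that point.

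For (2) your approach is correct but considerably more involved than the paper's. The paper bypasses the Euclidean comparison entirely by a convexity argument: from $F_{s_i}(|\sigma|)\subset B_N(s_i,4\varepsilon)$ and $\dist_N(s_i,s_0)\le 4\varepsilon$ one gets $F_{s_i}(|\sigma|)\subset B_N(s_0,8\varepsilon)$ for every $i$; since for sufficiently small $\varepsilon$ the ball $B_N(s_0,8\varepsilon)$ is geodesically convex, and $\mu_k$ is built by iterated shortest-geodesic interpolations between points already in this ball, the output $F_S(x)$ automatically stays in $B_N(s_0,8\varepsilon)\subset B_N(\sigma,8\varepsilon)$. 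This avoids any need to track curvature errors or to organise an induction on $k$. Your route does work---the Euclidean convex-combination bound with slack $4\varepsilon/(n+1)$ followed by an $O(\varepsilon^3)$ Riemannian correction---but it reproves by hand a containment that geodesic convexity gives for free.
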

\begin{proof}
(1):
Set $\sigma:=\{p,q\}$. Then, for $x=(1-\lambda) p+ \lambda q \in |\sigma|$, 
$F_p(x)=F_q(x)=\gamma_{pq}(\lambda)$. 
Therefore 
$F_S\bigl((1-\lambda) p+\lambda q\bigr)=\gamma_{pq}(\lambda)$, hence 
$F_S|_{|\sigma|}$ is a geodesic.

(2):
Set $\sigma:=\{s_0,\ldots,s_k\}$. 
Since $\diam(\sigma) \le 4\varepsilon$, $F_{s_i}(|\sigma|) \subset B_N(s_i,4\varepsilon)$ for each $i=0,\ldots,k$.
Therefore $F_{s_i}(|\sigma|) \subset B_N(s_0,8\varepsilon)$ for each $i$.
On the other hand, when $\varepsilon$ is sufficiently small, $B_N(x,8\varepsilon)$ is geodesically convex for
any $x \in N$.
Hence $F_S(|\sigma|) \subset B_N(s_0,8\varepsilon) \subset B_N(\sigma,8\varepsilon)$.
\end{proof}

In next Theorem \ref{thm:Delaunay2}, we prove that when $\varepsilon$ is sufficiently small, $(X_S,F_S)$ is a good triangulation. 
Obviously, Lemma \ref{lem:triangulation} follows from this result. 

\begin{thm}\label{thm:Delaunay2}
Let $\delta, \theta$ be positive numbers as in Lemma \ref{lem:S}.
There exist positive constants $c_0, c_1, c_2, c_3$ depending only on $n$, such that 
the following holds:  
\begin{quote}
Let $N$ be a $n$-dimensional closed Riemannian manifold. 
When $\varepsilon$ is sufficiently small, 
for any $S \subset N$ satisfying $P'(\varepsilon, \delta, \theta)$, 
$(X_S, F_S)$ is a good triangulation of $N$ with respect to $\varepsilon, c_0, c_1, c_2, c_3$. 
\end{quote}
\end{thm}

To prove Theorem \ref{thm:Delaunay2}, 
first we need the following definition:

\begin{defn}\label{def:converge}
Let $(X_i)_{i=1,2,\ldots}$ be a sequence of subsets of $\R^n$,  
$X_\infty$ be a subset of $\R^n$.
\begin{enumerate}
\item[(1)] When $X_\infty$ is a finite set, $(X_i)_i$ \textit{converges} to $X_\infty$ if 
$\sharp X_i = \sharp X_\infty (:=m)$ for sufficiently large $i$, and one can set 
$X_i=\{x^i_1, \ldots, x^i_m\}$, $X_\infty= \{x^\infty_1, \ldots, x^\infty_m\}$ so that 
$\lim_{i \to \infty} x^i_k = x^\infty_k$ for each $k=1,\ldots,m$. 
\item[(2)] When $X_\infty \cap B^n(r)$ is a finite set for any $r>0$, $(X_i)_i$ \textit{converges} to $X_\infty$ 
if there exists an increasing sequence $(r_j)_j$ of positive real numbers such that 
$\lim_{j \to \infty} r_j= \infty$ and $(X_i \cap B^n(r_j))_i$ converges to $X_\infty \cap B^n(r_j)$ for any $j$.
\end{enumerate}
\end{defn}

\begin{lem}\label{lem:scaling}
Let $(\varepsilon_i)_i$ be a sequence of positive numbers, 
$(S_i)_i$ be a sequence of finite sets on $N$, such that 
$\lim_{i \to \infty} \varepsilon_i=0$ and each 
$S_i$ satisfies $P'(\varepsilon_i, \delta, \theta)$.
Let $(p_i)_i$ be a sequence of points on $N$, 
and $(U_i,\varphi_i)$ be a local chart on $N$ around $p_i$ 
(i.e. $U_i$ is an open neighborhood of $p_i$ and $\varphi:U_i \to \R^n$ is an open embedding), 
with the following conditions: 
\begin{itemize}
\item $\varphi_i(U_i)= \bigl\{x=(x_1,\ldots,x_n) \in \R^n \bigm{|} |x| < \inj(N) \bigr\}$.
\item $\varphi_i(p_i)=(0,\ldots,0)$.
\item $(x_1,\ldots,x_n)$ is a geodesic coordinate on $N$. 
\end{itemize}
Let us define $T_i \subset \R^n$ by 
$T_i:=\varphi_i(U_i \cap S_i)/\varepsilon_i$.
Then, the following holds:
\begin{enumerate}
\item[(1)] Up to subsequence, $(T_i)_i$ converges to some $T_\infty \subset \R^n$ in the sense of Definition \ref{def:converge}.
\item[(2)] For any $a<1/2$, $b>2$, $c<5$, $d<10$, 
$T_\infty$ satisfies 
$P_1(a)$, $P_2(b)$, $P_3(c)$, $P_4(d)$.
\end{enumerate}
\end{lem}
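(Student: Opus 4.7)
The strategy is a standard blow-up argument. Since $(x_1, \ldots, x_n)$ are geodesic coordinates centered at $p_i$, the rescaled pullback metrics $g_i := \varepsilon_i^{-2}\varphi_i^*g_N$ converge uniformly on compact subsets of $\R^n$ to the standard Euclidean metric as $\varepsilon_i \to 0$. Each condition satisfied by $S_i$ in $(N, g_N)$ transfers, after rescaling, to a nearby property of $T_i$ in $(\R^n, g_i)$; passing to the limit yields the corresponding Euclidean condition on $T_\infty$.

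For (1), I would first use $P'_1(\varepsilon_i/2)$ together with the metric convergence to show that, for every fixed $R > 0$, the cardinality of $T_i \cap B^n(R)$ is uniformly bounded in $i$ by a packing constant depending only on $R$ and $n$. A standard diagonal extraction then produces a subsequence along which $T_i \cap B^n(R)$ converges as a finite subset of $\R^n$ for every integer $R$, assembling a limit $T_\infty \subset \R^n$ in the sense of Definition \ref{def:converge}(2).

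For (2), each of the four conditions is verified by contradiction. $P_1(a)$ with $a < 1/2$ is immediate: distinct limit points are limits of pairs in $T_i$ at rescaled Riemannian distance at least $1/2$, hence at Euclidean distance at least $1/2 - o(1) > a$. $P_2(b)$ with $b > 2$ follows because any $x \in \R^n$ is, for large $i$, within rescaled Riemannian distance $2 + o(1) < b$ of some element of $T_i$, which after extraction yields a point of $T_\infty \cap B^n(x, b)$. For $P_3(c)$ with $c < 10$, if $T_\infty$ contained $n + 2$ points on a sphere $S^{n-1}(x, r)$ with $r < c$, I would enclose them in a Euclidean annulus $B^n(x : r - \eta, r + \eta)$ with $\eta$ so small that $(r + \eta)/(r - \eta) \le 1 + \delta$ and $r + \eta < 10$; the metric convergence then carries this back to a Riemannian annulus $B_N(\,\cdot\, : s, (1 + \delta)s)$ with $s \le 10\varepsilon_i$ containing $n + 2$ points of $S_i$, contradicting $P'_3(10\varepsilon_i, \delta)$. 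For $P_4(d)$ with $d < 10$, $n + 1$ limit points are approximated by $s_j^{(i)} \in S_i$ of Riemannian diameter at most $10\varepsilon_i$ for large $i$, so $P'_4$ gives $\theta_n(s_0^{(i)}, \ldots, s_n^{(i)}) \ge \theta$; since each Riemannian unit vector $e_{s_a^{(i)} s_b^{(i)}} \in T_{s_a^{(i)}} N$ converges under the chart and rescaling to the Euclidean unit vector between the corresponding limit points, the Euclidean analogue of $\theta_n$ for the limit points is at least $\theta > 0$, so the limit points are nondegenerate.

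The main obstacle is the quantitative bookkeeping in the $P_3$ step, where the Euclidean radii $r \mp \eta$, the corresponding rescaled Riemannian radii, and the annulus ratio $1 + \delta$ must all be coordinated simultaneously; this is routine but uses in an essential way the strict inequalities $a < 1/2$, $b > 2$, $c, d < 10$, which provide exactly the slack needed to absorb the $o(1)$ errors from the metric convergence.
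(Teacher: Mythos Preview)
Your proposal is correct and follows essentially the same approach as the paper. For (1) both you and the paper use $P'_1$ to bound $\sharp(T_i \cap B^n(R))$ uniformly and then a diagonal extraction; for (2) the paper simply declares it ``an immediate consequence'' of $P'(\varepsilon_i,\delta,\theta)$, whereas you unpack each of $P_1,P_2,P_3,P_4$ explicitly and explain how the strict inequalities $a<1/2$, $b>2$, $c,d<10$ absorb the $o(1)$ errors from the convergence of the rescaled metrics --- exactly the content the paper leaves implicit.
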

\begin{proof}
Fix aribitrary $a<1/2$.
Then, for any $r>0$, $T_i \cap B^n(r)$ satisfies $P_1(a)$ for sufficiently large $i$.
Hence $\sharp \bigl(T_i \cap B^n(r)\bigr)$ is bounded uniformly on $i$. 
Therefore, up to subsequence $(T_i \cap B^n(r))_i$ converges to a certain finite subset in the closure of $B^n(r)$. 
Then, the diagonal argument proves (1).
(2) is an immediate consequence of the assumption that 
$S_i$ satisfies $P'(\varepsilon_i, \delta, \theta)$ for each $i$.
\end{proof}

Finally, we prove Theorem \ref{thm:Delaunay2}: 

\begin{proof}[\textbf{Proof of Theorem \ref{thm:Delaunay2}:}]
First we show that $(X_S, F_S)$ is a triangulation when $\varepsilon$ is sufficiently small. 
By Remark \ref{rem:munkres}, it is enough to show that 
$F_S$ is a nondegenerate homeomorphism.
Moreover, since $F_S$ is continuous and $|X_S|$ is compact, 
it is enough to show the following two assertions:
\begin{enumerate}
\item[(1)]  When $\varepsilon>0$ is sufficiently small, $F_S$ is nondegenerate.
\item[(2)]  When $\varepsilon>0$ is sufficiently small, $F_S$ is a bijection.
\end{enumerate}
(1) follows from Theorem \ref{thm:Delaunay} and Lemma \ref{lem:scaling}
(notice that one can take $b,c,d$ so that $2<b<c<5$, $2b<d<10$).

We prove (2). For any $s \in S$, we define $X_s \subset X_S$ by 
\[
V(X_s):= S \cap B_N(s:100\varepsilon), \qquad
\Sigma(X_s):= \Sigma(X_S) \cap 2^{V(X_s)}.
\]
Notice that the following two assertions follow from Theorem \ref{thm:Delaunay} and Lemma \ref{lem:scaling}
(to prove (2'), we also need the fact that a $C^1$-approximation of an embedding (i.e. an injective immersion) is also an embedding: 
see Theorem 8.8 in \cite{Munkres}). 
\begin{itemize}
\item[(2'):] When $\varepsilon>0$ is sufficiently small,  $F_S|_{|X_s|}$ is injective for any $s \in S$.
\item[(2''):] When $\varepsilon>0$ is sufficiently small, $B_N(s:2\varepsilon) \subset F_S(|X_s|)$ for any $s \in S$.
\end{itemize}
Now we show that (2) follows from (2') and (2'').
Suppose that $\varepsilon>0$ is so small that 
$F_S|_{|X_s|}$ is injective, and $B_N(s: 2\varepsilon) \subset F_S(|X_s|)$ for any $s \in S$.
If $F_S$ is not injective, there exist $x,y \in |X_S|$ such that $x \ne y$ and $F_S(x)=F_S(y)$.
Take $\sigma, \tau \in \Sigma(X_S)$ such that 
$x \in |\sigma|, y \in |\tau|$. 
Since $F_S(|\sigma|) \cap F_S(|\tau|) \ne \emptyset$, 
Lemma \ref{lem:later} (2) implies that 
$\tau \subset B_N(s: 100\varepsilon)$ for any $s \in \sigma$.
However it is a contradiction, since $F_S|_{|X_s|}$ is injective for any $s \in S$.
On the other hand, since $S$ satisfies $P'_2(2\varepsilon)$, 
$\bigcup_{s \in S} B_N(s:2\varepsilon) = N$.
Hence $F_S$ is surjective.
We have proved that $(X_S, F_S)$ is a triangulation for sufficiently small $\varepsilon$. 

Finally we show that there exist $c_0, c_1, c_2, c_3$ such that 
$(X_S, F_S)$ is a good triangulation with respect to $\varepsilon, c_0, c_1, c_2, c_3$
(i.e. it satisfies (1)-(4) in Definition \ref{defn:triangulation}), 
when $\varepsilon$ is sufficiently small. 
(1) was confirmed in Lemma \ref{lem:later} (1).
(2), (3), (4) are immediate consequences of our assumption that $S$ satisfies $P'(\varepsilon, \delta, \theta)$. 
\end{proof}

\section{Proof of Proposition A} 

In the following of this paper, we fix constants $c_0, c_1, c_2, c_3$ which appeared in Lemma \ref{lem:triangulation}. 
We abbreviate "good triangulation with respect to $\varepsilon, c_0, c_1, c_2, c_3$" as 
"good triangulation with respect to $\varepsilon$". 

First let us rephrase Proposition A, 
using terms introduced in Section 3. 

\begin{prop}\label{prop:gizagiza}
Let $N$ be a closed Riemannian manifold, 
$(X,F)$ be a good triangulation of $N$ with respect to $\varepsilon>0$. Then, there holds 
\[
w\bigl(N \setminus F(V(X))\bigr) \le \const_n \varepsilon.
\]
\end{prop}

By (3) in Definition \ref{defn:triangulation}, there exists a map
$h:V(X) \to \{ j \in \Z \mid 0 \le j \le c_2 \}$ 
such that 
for any simplex $\sigma=\{v_0, \ldots, v_k\}$ of $X$, 
$h(v_0), \ldots, h(v_k)$ are distinct.

Then we extend $h$ to a continuous function on $|X|$ (still denoted by $h$) as follows:
for any simplex $\sigma=\{v_0, \ldots,v_k\}$ of $X$, $h|_{|\sigma|}$ is defined by 
\[
h\Biggl(\sum_{0 \le j \le k} t_jv_j \Biggr) 
:= \sum_{0 \le j \le k} t_j h(v_j).
\]
We define a continuous function $h'$ on $N$ by $h':=h \circ F^{-1}$.
Obviously $0 \le h' \le c_2$.
Although $h'$ is not of $C^\infty$, 
it is of $C^\infty$ on $U:= \bigcup_{\sigma \in \Sigma_n(X)} F(\interior |\sigma|)$.

Let $\rho$ be a $\R_{\ge 0}$ valued smooth function defined on $\R_{\ge 0}$, such that 
$\rho$ is constant near $0$, $\supp \rho \subset [0,1]$ and 
\[
\int_{\R^n} \rho\bigl(|x|\bigr) dx =1.
\]
For $\delta>0$, let us define $\rho_\delta \in C_0^\infty(\R_{\ge 0})$ by 
$\rho_\delta(t):= \delta^{-n}\rho(t/\delta)$.
When $0<\delta<\inj(N)$, the following formula defines 
a $C^\infty$ function $h_\delta$ on $N$:
\[
h_\delta(x):= \int_{T_xN} h'\bigl(\exp_x(\zeta)\bigr) \rho_\delta\bigl(|\zeta|\bigr)\,d\vol_x(\zeta).
\]
$\vol_x$ denotes the volume form on $T_xN$ defined by the Riemannian metric on $N$.
$0 \le h' \le c_2$ implies that $0 \le h_\delta \le c_2$. 
We prove the following lemma:

\begin{lem}\label{lem:hdelta}
For any compact set $K \subset N \setminus F(V(X))$, 
$\liminf_{\delta \to 0} \min_{x \in K} |dh_\delta(x)| \ge \const_n \varepsilon^{-1}$.
\end{lem}

First we show that Proposition \ref{prop:gizagiza} follows from Lemma \ref{lem:hdelta}.
Denote the constant in Lemma \ref{lem:hdelta} by $c$. 
Then, for any $c' \in (0,c)$, 
$\min_{x \in K} |dh_\delta(x)| \ge c'\varepsilon^{-1}$ for sufficiently small $\delta$. 
Setting $h:=c'^{-1} \varepsilon h_\delta$, $h$ satisfies 
$\| h\| \le c'^{-1}c_2 \varepsilon$ and $\min_{x\in K} |dh(x)| \ge 1$. 
Hence 
\[
w\bigl(N \setminus F(V(X)) \bigr) = \sup_K w_{N \setminus F(V(X))}(K) \le \const_n \varepsilon.
\]
To prove Lemma \ref{lem:hdelta}, first we need the following sublemma.

\begin{lem}\label{lem:vx}
For any $x \in N \setminus F(V(X))$,  
there exists $\xi \in T_xN$ such that: 
$h'$ is differentiable at $x$ in the direction $\xi$, and 
$dh'(\xi)/|\xi| \ge (\sqrt{2}c_1 \varepsilon)^{-1}$. 
\end{lem}
\begin{proof}
Let $\sigma=\{v_0,\ldots,v_k\}$ be the unique simplex of $X$ such that 
$F^{-1}(x) \in \interior|\sigma|$.
Since $x \notin F(V(X))$, $k \ge 1$.
Since $h(v_0), \ldots, h(v_k)$ are distinct integers and $\diam |\sigma| \le \sqrt{2}$, 
there exists $\eta \in T_{F^{-1}(x)}|\sigma|$ such that $dh(\eta)/|\eta| \ge 1/\sqrt{2}$.
Setting $\xi:=dF(\eta)$,  
(2) in Definition \ref{defn:triangulation} implies that 
$dh'(\xi)/|\xi| \ge (\sqrt{2}c_1\varepsilon)^{-1}$.
\end{proof}

\begin{proof}[\textbf{\textit{Proof of Lemma \ref{lem:hdelta}}}]
Let us define a map $e$ by 
\[
e: \bigl\{(x,\xi) \in TN \bigm{|} |\xi| \le \inj(N) \bigr\} \to N; \qquad
(x,\xi) \mapsto \exp_x(\xi).
\]
For any $\xi \in TN$, let $\tilde{\xi}$ be its horizontal lift (with respect to the Levi-Civita connection). Then,
\[
dh_\delta(\xi) = \int_{T_xN} dh'\bigl(de(\tilde{\xi}(\zeta)) \bigr) \rho_\delta\bigl(|\zeta|\bigr)\,d\vol_x(\zeta).
\]
Since $h'$ is smooth on $U=\bigcup_{\sigma \in \Sigma_n(X)} F(\interior |\sigma|)$ (hence almost everywhere on $N$),  
the right hand side makes sense.
For each $x \in K$, Lemma \ref{lem:vx} shows that there exists a vector field $\xi$ defined near $x$, such that 
$h'$ is differentiable at $x$ in the direction $\xi(x)$, and $dh'(\xi(x))/|\xi(x)| \ge (\sqrt{2}c_1 \varepsilon)^{-1}$.
Fix $c$ so that $0<c<(\sqrt{2}c_1)^{-1}$. 
Since
$\lim_{(y,\zeta) \to (x,0)} de(\tilde{\xi}(y,\zeta)) = de(\tilde{\xi}(x,0))= \xi(x)$, 
the following inequality holds for 
sufficiently small $r, \delta>0$:
\[
y \in B_N(x:r), \, \zeta \in T_yN, \, |\zeta| \le \delta, e(y,\zeta) \in U \implies 
dh'\bigl(de(\tilde{\xi}(\zeta))\bigr)/|de(\tilde{\xi}(\zeta))| \ge c \varepsilon^{-1}.
\]
Moreover, by taking $r, \delta$ sufficiently small, we may also assume that the following holds:
\begin{align*}
\big\lvert de(\tilde{\xi}(\zeta)) \big\rvert/\big\lvert \xi(x)\big\rvert &\ge 1/2 \qquad \bigl(
y \in B_N(x:r), \zeta \in T_yN, |\zeta| \le \delta \bigr),\\
\big\lvert \xi(z) \big\rvert/ \big\lvert \xi (x) \big\rvert &\le 2  \qquad\quad \bigl(
z \in B_N(x:r) \bigr).
\end{align*}
We denote $\xi, r, \delta$ by $\xi_x, r_x, \delta_x$.

Since $K$ is compact, there exist finitely many points $x_1, \ldots, x_m \in K$ such that
$\bigl\{ B_N(x_i:r_{x_i}) \bigr\}_{1 \le i \le m}$ covers $K$.
Let $\delta:= \min_{1 \le i \le m} \delta_{x_i}$.
For any $y \in K$, there exists $1 \le i \le m$ such that 
$y \in B_N(x_i:r_{x_i})$. Then, we get 
\begin{align*}
dh_\delta\bigl(\xi_{x_i}(y)\bigr)&=\int_{T_yN} dh'\bigl(de(\tilde{\xi_{x_i}}(\zeta))\bigr) \rho_\delta(|\zeta|)\,d\vol_y(\zeta) \\
&\ge c\varepsilon^{-1} \int_{T_yN} \big\lvert de(\tilde{\xi}_{x_i}(\zeta))\big\rvert \rho_\delta(|\zeta|)\,d\vol_y(\zeta) \\
&\ge c(2\varepsilon)^{-1} \big\lvert\xi_{x_i}(x_i)\big\rvert \ge c(4\varepsilon)^{-1}\big\lvert \xi_{x_i}(y) \big\rvert.
\end{align*}
Hence $|dh_\delta(y)| \ge c(4\varepsilon)^{-1}$ for any $y \in K$.
\end{proof}

\section{Proof of Proposition B} 

First we rephrase Proposition B, using terms introduced in Section 3. 

\begin{prop}\label{prop:hakidasu}
For any $n$-dimensional connected closed Riemannian manifold $N$, there exists $\varepsilon(N)>0$ which satisfies the following property:
\begin{quote}
Let $(X,F)$ be a good triangulation of $N$ with respect to $\varepsilon \in (0, \varepsilon(N))$. 
Then, for any $v \in V(X)$, there exists an open set $W \subset N \setminus F(V(X))$ and a diffeomorphism $\Phi: W \to N \setminus \{F(v)\}$ such that 
$d^+(\Phi) \le \const_n \diam(N) \varepsilon^{-1}$. 
\end{quote}
\end{prop}

First we show that Theorem \ref{thm:closed} follows from Proposition \ref{prop:hakidasu}.
Let $N$ be a closed, connected $n$-dimensional Riemannian manifold, and $K \subsetneq N$ be a compact set on $N$. 
We have to show that $w_N(K) \le \const_n \diam (N)$.

Let $(X,F)$ be a good triangulation of $N$ with respect to $\varepsilon>0$.  
When $\varepsilon$ is sufficiently small, $F(V(X)) \setminus K \ne \emptyset$. 

Take an arbitrary $v \in V(X) \setminus F^{-1}(K)$. When $\varepsilon$ is sufficiently small, 
there exist $W, \Phi$ as in the claim in Proposition \ref{prop:hakidasu}.
Then 
\begin{align*}
w\bigl(N \setminus \{F(v)\}\bigr) &\le \const_n \diam(N)\varepsilon^{-1} w(W) \\
&\le \const_n \diam(N)\varepsilon^{-1} w\bigl(N \setminus F(V(X)) \bigr) \\
&\le \const_n \diam(N).
\end{align*}
The first inequality follows from Lemma \ref{lem:gg'}, and 
the last inequality follows from Proposition \ref{prop:gizagiza} (that is, Proposition A). 
Since $w_N(K) = w_{N \setminus \{F(v)\}}(K) \le w\bigl(N \setminus \{F(v)\}\bigr)$, 
it completes the proof of Theorem \ref{thm:closed}.

\subsection{Sketch of the proof of Proposition \ref{prop:hakidasu}}
We sketch the proof of Proposition \ref{prop:hakidasu}.
Details are carried out in Section 6.2 - 6.4.
Let $N$ be a connected, closed Riemannian manifold. 
We denote the Riemannian metric on $N$ by $g_N$. 
As a first step, we show the following lemma in Section 6.2.

\begin{lem}\label{lem:tree}
Let $(X,F)$ be a good triangulation of $N$ with respect to $\varepsilon>0$. 
Then there exists a tree $T \subset X$ such that 
$V(T)=V(X)$ and 
\[
\diam \bigl(|T|, F^*g_N|_{|T|}\bigr) \le \const_n \diam (N).
\]
\end{lem}
\begin{rem}
\textit{Tree} means a simply-connected $1$-dimensional simplicial complex.
Moreover, in Lemma \ref{lem:tree}, distance of two points $x, y \in |T|$ is defined to be the infimum of lengths of 
piecewise linear paths connecting $x$ and $y$, where lengths of path on $|T|$ is defined by the metric $F^*g_N|_{|T|}$. 
\end{rem}

Let $(X,F)$ be a good triangulation of $N$ with respect to $\varepsilon>0$, and 
take a tree $T \subset X$ as in Lemma \ref{lem:tree}. 
Our next step is to 
define a compact Riemannian manifold (with boundary) $\tilde{T}$ and an embedding map $i:|T| \to \tilde{T}$.
Roughly speaking, $\tilde{T}$ is obtained by "fatten" an embedded tree $F(|T|) \subset N$, i.e. 
we replace vertices with closed balls, and 
replace edges with cylinders. 
A rigorous definition is described in Section 6.3. 
Then we show the following two lemmas
(Lemma \ref{lem:umekomi} is proved in Section 6.3, and Lemma \ref{lem:henkei} is proved in Section 6.4.):

\begin{lem}\label{lem:umekomi}
For any $0<\delta<1$, there exists $\varepsilon(\delta,N)>0$ such that the following holds:
If $0< \varepsilon < \varepsilon(\delta,N)$, 
there exists an embedding $I: \tilde{T} \to N$ such that:
\begin{enumerate}
\item[(1)] $I \circ i :|T| \to N$ is equal to $F|_{|T|}$.
\item[(2)] $1-\delta \le d^-(I)$ and $d^+(I) \le 1+\delta$. 
\end{enumerate}
\end{lem}

\begin{lem}\label{lem:henkei}
For any $v \in V(T)$, there exists a neighborhood $Z$ of $\partial\tilde{T}$ in $\tilde{T}$, and a
diffeomorphism $\varphi:Z \to \tilde{T} \setminus \bigl\{ i(v) \bigr\}$ with the following properties: 
\begin{enumerate}
\item[(1)] $\varphi \equiv \id$ on some neighborhood of $\partial \tilde{T}$.
\item[(2)] $d^+(\varphi) \le \const_n\diam(|T|)\varepsilon^{-1}$, where a metric on $Z$ is a restriction of the metric on $\tilde{T}$. 
\end{enumerate}
\end{lem}

We prove Proposition \ref{prop:hakidasu} assuming those results.
We claim that 
\[
\varepsilon(N):= \min \{ \varepsilon(1/2, N), \diam(N) \}
\]
satisfies the requirement in Proposition \ref{prop:hakidasu}. 

Suppose that $0 < \varepsilon < \varepsilon(N)$. 
Take 
$I:\tilde{T} \to N$ as in Lemma \ref{lem:umekomi}, and 
$Z$, $\varphi$ as in Lemma \ref{lem:henkei}. 
Then define $W \subset N$ by 
$W:=\bigl( N \setminus I(\tilde{T}) \bigr) \cup I(Z)$, and 
define $\Phi: W \to N \setminus \{F(v)\}$ by 
\[
\Phi(x) = \begin{cases} 
          x &\bigl(x  \in N \setminus I(\tilde{T}) \bigr) \\
          I \circ \varphi \circ I^{-1}(x) &\bigl(x \in I(Z)\bigr) 
          \end{cases}.
\]

We have to check that $d^+(\Phi) \le \const_n \diam(N) \varepsilon^{-1}$, i.e. 
any $x \in W$ and $\xi \in T_xW$ satisfy
$|d\Phi(\xi)| \le \const_n\diam(N)\varepsilon^{-1} |\xi|$.
If $x \notin I(\tilde{T})$, using $\varepsilon(N) \le \diam(N)$ we obtain 
\[
|d\Phi(\xi)| = |\xi|  \le \diam(N) \varepsilon^{-1} |\xi|.
\]
On the other hand, if $x \in I(Z)$, then 
\[
|d\Phi(\xi)| \le \const_n \diam(|T|) \varepsilon^{-1} |\xi| \le 
\const_n\diam(N)\varepsilon^{-1} |\xi|.  
\]
The first inequality follows from Lemma \ref{lem:umekomi} (2) and Lemma \ref{lem:henkei} (2), 
the second inequality follows from Lemma \ref{lem:tree}.

\subsection{Proof of Lemma \ref{lem:tree}}
Since $(X,F)$ satisfies Definition \ref{defn:triangulation} (2), 
Lemma \ref{lem:tree} follows from the following lemma:

\begin{lem}\label{lem:sympcpx}
Let $X$ be a connected simplicial complex. 
Then, there exists a tree $T \subset X$ such that 
$V(T)=V(X)$ and $\diam(|T|) \le \const_n \diam(|X|)$, where 
$\diam(|T|), \diam(|X|)$ are defined with respect to 
the standard metrics (see Section 3).
\end{lem}
\begin{proof}
First we show the following claim:
\begin{quote}
For any $k \in \{2, \ldots, n\}$, $\diam(|X_{k-1}|) \le \const_k \cdot \diam(|X_k|)$.
\end{quote}
Let $c:[0,1] \to |X_k|$ be a piecewise linear map such that $c(0), c(1) \in |X_{k-1}|$.
Then there exists $0=t_0<t_1< \cdots < t_m=1$ such that:
\begin{itemize}
\item $c(t_0), \ldots,c(t_m) \in |X_{k-1}|$.
\item For any $i=1,\ldots, m$ there exists $\sigma_i \in \Sigma_k(X)$ such that 
$c\bigl([t_{i-1}, t_i]\bigr) \subset |\sigma_i|$.
\end{itemize}
For each $i=1,\ldots,m$, there exists $c_i:[t_{i-1},t_i] \to |\partial \sigma_i|$ such that 
$c_i(t_{i-1})=c(t_{i-1})$, 
$c_i(t_i)=c(t_i)$ and 
$l(c_i) \le \const_k \cdot  l(c|_{[t_{i-1},t_i]})$
($l$ denotes the lengths of curves).
By connecting $c_1, \ldots, c_m$ we get a map $c':[0,1] \to |X_{k-1}|$ such that 
$c'(0)=c(0)$, $c'(1)=c(1)$, 
$l(c') \le \const_k \cdot l(c)$.
Hence we have proved the above claim.
By applying the above claim for $k=2,\ldots,n$, we get 
$\diam(|X_1|) \le \const_n \cdot \diam(|X|)$.

Take an arbitrary function $\rho: \Sigma_1(X) \to [1,2]$ such that $\bigl\{\rho(\sigma)\bigr\}_{\sigma \in \Sigma_1(X)}$ are linearly independent over $\Q$.
\textit{A path on $X$} means a subcomplex of $X$ which is isomorphic (as a simplicial complex) to some 
$P_l\,(l=1,2,\ldots)$, where $P_l$ is defined as 
\[
V(P_l)=\{0,\ldots,l\}, \qquad 
\Sigma(P_l)=\bigl\{ \{0,1\}, \{1,2\}, \ldots, \{l-1,l\} \bigr\}.
\]
For any path $\gamma$ on $X$, let $\rho(\gamma):=\sum_{\sigma \in \Sigma_1(\gamma)} \rho(\sigma)$.
If two paths $\gamma, \gamma'$ satisfy $\rho(\gamma)=\rho(\gamma')$, then $\gamma=\gamma'$.

Fix an arbitrary element $v_0 \in V(X)$.
For each $v \in V(X)$, let $\gamma_v$ be the path on $X$ connecting $v$ and $v_0$, which 
attains the minimum value of $\rho$.
Then $\rho(\gamma_v) \le \const_n \cdot \diam(|X|)$, since 
$\diam(|X_1|) \le \const_n \cdot \diam(|X|)$ and 
$\rho(\sigma) \le 2 $ for any $\sigma \in \Sigma_1(X)$.

Let $T$ be the union of $\gamma_v$, where $v$ runs over all elements of $V(X)$.
Then, it is easy to check that $T$ is a tree.
Moreover, $\diam(|T|) \le \const_n \cdot \diam(|X|)$, since for any $v,w \in V(X)$
\[
\dist_{|T|}(v,w) \le \dist_{|T|}(v,v_0) + \dist_{|T|}(v_0,w) \le \const_n \cdot  \diam(|X|).
\]
\end{proof}

\subsection{Definition of $\tilde{T}$ and $i$}

Let $N$ be a closed Riemannian manifold, and $(X,F)$ be a good triangulation of $N$ with respect to $\varepsilon>0$. 
Let $T \subset X$ be a tree as in Lemma \ref{lem:tree}. 
Our aim in this section is to define a compact Riemannian manifold (with boundary) $\tilde{T}$ and an embedding map $i: |T| \to \tilde{T}$. 
We also prove Lemma \ref{lem:umekomi}. 

We fix a continuous function $\mu:[0,\infty) \to [0,\infty)$ satisfying the following properties:
\begin{itemize}
\item $\mu(t)=c_3t$ for $0 \le t \le 1/\sqrt{1+{c_3}^2}$.
\item $\mu(t)=\sqrt{1-t^2}$ when $t \ge 1/\sqrt{1+{c_3}^2}$ and $t$ is sufficiently close to $1/\sqrt{1+{c_3}^2}$.
\item $\mu(t) \equiv c_3/2\sqrt{1+{c_3}^2}$ when $t \ge 1$.
\item $\mu(t)$ is a non-increasing function for $t \ge 1/\sqrt{1+{c_3}^2}$.
\end{itemize}

In the following, we abbreviate $F(u) \in N$ as $u$ for any $u \in V(T)$. 
Let $u \in V(T)$ , $v \in N_T(u)$ and $r>0$. 
We define $A^-_{uv}(r), A^+_{uv}(r), A_{uv}(r) \subset T_uN$ as follows
($d_{uv}$ abbreviates $\dist_N(u,v)$. $H_{uv} \subset T_uN$ denotes the orthogonal complement of $\overrightarrow{uv} \cdot \R$): 
\begin{align*}
A_{uv}(r)&:= \bigl\{ h+te_{uv} \bigm{|} 0 \le t \le 2d_{uv}/3, \, h \in H_{uv},\, |h| \le r \mu(t/r) \bigr\}, \\
A^-_{uv}(r)&:= \bigl\{ h+ te_{uv} \bigm{|} 0 \le t \le d_{uv}/3, \, h \in H_{uv}, \, |h| \le r \mu(t/r) \bigr\}, \\
A^+_{uv}(r)&:= \bigl\{ h+ te_{uv} \bigm{|} d_{uv}/3 \le t \le 2d_{uv}/3, h \in H_{uv}, \, |h| \le r \mu(t/r) \bigr\}.
\end{align*}
We equip $A_{uv}(r), A^{\pm}_{uv}(r)$ with the metric on $T_uN$.
\begin{center}
\input{AUV.tpc}

Fig.1:
shaded regions are $A_{uv}(r)$, $A_{uv}^-(r)$, $A_{uv}^+(r)$. 
\end{center}

Since the triangulation $(X,F)$ satisfies Definition \ref{defn:triangulation} (4), there holds
\[
u \in V(T),\quad v, w \in N_T(u),\quad v \ne w \implies A_{uv}(r) \cap A_{uw}(r) = \{0\}, 
\]
where $0$ denotes the origin of $T_uN$. 

Let $i: H_{uv} \to H_{vu}$ be an isometry defined by a parallel transport along the shortest geodesic segment 
connecting $u$ and $v$.
When $r<d_{uv}/3$, then $\mu(t/r) \equiv c_3/2\sqrt{1+c_3^2}$ for $t \ge d_{uv}/3$. 
Therefore we can define an isometry $\psi_{uv}: A^+_{uv}(r) \to A^+_{vu}(r)$ by 
\[
\psi_{uv}(h+te_{uv}) = i(h) + (d_{uv}-t)e_{vu}.
\]

For any $0 < r < c_0\varepsilon/3$, 
there holds $r  < d_{uv}/3$ for any $u \in V(T)$, $v \in N_T(u)$ 
since Definition \ref{defn:triangulation} (1), (2) shows $d_{uv} \ge c_0 \varepsilon$. 
For each $u \in V(T)$, let us define 
\begin{align*}
B_u(r)&:=\bigl\{x \in T_uN  \bigm{|} |x| \le r \bigr\}, \\
C_u(r)&:= B_u(r) \cup \bigcup_{v \in N_T(u)} A_{uv}(r).
\end{align*}
\begin{center}
\input{CU.tpc}

Fig.2: shaded region is $C_u(r)$. 
\end{center}

We define an equivalence relation $\sim$ on $\bigsqcup_{u \in V(T)} C_u(r)$ so that: 
$x \sim y$ if and only if $x=y$ or 
$x \in A_{uv}^+(r)$, $y \in A_{vu}^+(r)$, $y=\psi_{uv}(x)$ for some $u \in V(T), v \in N_T(u)$.

Then  we define $T(r)$, a compact Riemannian manifold with boundary, 
by $T(r):=\bigsqcup_{u \in V(T)}C_u(r)/\sim$.
Since $\psi_{uv}$ are isometories, $T(r)$ carries a natural Riemannian metric $g_{T(r)}$. 
$|\,\cdot\,|_{g_{T(r)}}$ is abbreviated as $|\,\cdot\,|_{T(r)}$.
We define $i_r:|T| \to T(r)$ by 
\[
i_r\bigl((1-t)u + tv \bigr):= \bigl[ t \overrightarrow{uv} \bigr] \qquad (0 \le t \le 1),
\]
where $u \in V(T)$, $v \in N_T(u)$ (since $\psi_{uv}\bigl(t \overrightarrow{uv}\bigr)=(1-t)\overrightarrow{vu}$, 
this is well-defined).

\begin{rem}\label{rem:subset}
The following remarks are immediate consequences of the definition:
\begin{itemize}
\item If $r' \le r$, there exists a natural isometric embedding $T(r') \to T(r)$.
In the following, we regard $T(r')$ as a submanifold of $T(r)$. 
\item If $S$ is a subtree of $T$, there exists a natural isometric embedding $S(r) \to T(r)$.
In the following, we regard $S(r)$ as a submanifold of $T(r)$. 
\end{itemize}
\end{rem}

To define $\tilde{T}$ and $i:|T| \to \tilde{T}$, we need the following lemma: 

\begin{lem}\label{lem:umekomi2}
There exists a positive constant $\rho_n$ depending only on $n$, 
which satisfies the following properties: 
\begin{quote}
For any $0<\delta<1$, there exists $\varepsilon(\delta,N)>0$ such that:
if $0<\varepsilon<\varepsilon(\delta,N)$ and $r \le \varepsilon\rho_n$, 
there exists an embedding $I: T(r) \to N$ satisfying 
\begin{enumerate}
\item[(a):] $I \circ i_r:|T| \to N$ is equal to $F|_{|T|}$.
\item[(b):] $1-\delta \le d^-(I)$ and $d^+(I) \le 1+\delta$. 
\end{enumerate}
\end{quote}
\end{lem}

We fix $\rho_n>0$ as in Lemma \ref{lem:umekomi2}, and define $\tilde{T}$ and $i:|T| \to \tilde{T}$ as 
\[
\tilde{T}:=T(\varepsilon \rho_n), \qquad i:=i_{\varepsilon \rho_n}.
\]
Then, it is clear that Lemma \ref{lem:umekomi} holds. 

\begin{proof}[\textbf{\textit{Proof of Lemma \ref{lem:umekomi2}}}]
Setting cut off function $\chi: [1/3, 2/3] \to [0,1]$ such that 
$\chi \equiv 0$ near $1/3$, $\chi \equiv 1$ near $2/3$ and 
$\chi(t) + \chi(1-t) =1$,
we define $I:T(r) \to N$ as follows: 
\begin{enumerate}
\item[(i)] For any $u \in V(T)$, $I\bigl([x]\bigr):=\exp_u(x)$ where $x \in C_u^-(r):=B_u(r) \cup \bigcup_{v \in N_T(u)}A^-_{uv}(r)$.
\item[(ii)] For any $u \in V(T)$, $v \in N_T(u)$ and 
$x=h+t\overrightarrow{uv} \in A^+_{uv}(r)$, 
\[
I([x]):= \gamma_{\exp_u(x)\exp_v(\psi_{uv}(x))}\bigl(\chi(t)\bigr).
\]
Recall that $\gamma$ denotes the shortest geodesic (see Section 1.3). 
Since $\chi(t)+\chi(1-t)=1$, this is well-defined.
\end{enumerate}
It is clear from the above construction that for any $u \in V(T)$ and $v \in N_T(u)$, 
$I(t \overrightarrow{uv}) = \gamma_{uv}(t)$. 
On the other hand, since $(X_S, F_S)$ satisfies Definition \ref{defn:triangulation} (1), 
$F((1-t)u+tv)= \gamma_{uv}(t)$. Hence (a) holds. 

If the metric of $N$ is flat, $I$ is isometric. 
Therefore $I$ satisfies (b) with $\delta>0$ when $\varepsilon$ is sufficiently small. 
In particular, $I$ is an immersion for sufficiently small $\varepsilon$. 

Finally, we have to specify $\rho_n$ so that if $r/\varepsilon \le \rho_n$, then $I$ is injective for sufficiently small $\varepsilon$. 
For each $u \in V(T)$, define a tree $T_u \subset T$ by 
\[
V(T_u):= \{u\} \cup N_T(u),  \qquad
\Sigma_1(T_u):= \bigl\{ \{u,v\} \bigm{|} v \in N_T(u) \bigr\}.
\]
Following Remark \ref{rem:subset}, 
we consider $T_u(r)$ as a submanifold of $T(r)$.
It is easy to see that if $r/\varepsilon < c_0c_3/\sqrt{1+c_3^2}$ and $\varepsilon$ is sufficiently small, then 
$I|_{T_u(r)}$ is injective. 

On the other hand, for each $e \in \Sigma_1(T)$, 
$\bar{e}$ denotes the subtree of $T$ consisting of $e$ and two vertices of $e$. 
Following Remark \ref{rem:subset}, 
we consider $\bar{e}(r)$ as a submanifold of $T(r)$. 
It is easy to see that if $r/\varepsilon < c_0/2$ and $\varepsilon$ is sufficiently small, 
the following holds: 
\[
e, e' \in \Sigma_1(T), e \cap e' = \emptyset \implies I(\bar{e}(r)) \cap I(\bar{e'}(r)) = \emptyset. 
\]

We show that $I$ is injective when $r/\varepsilon < \min \bigl\{ c_0c_3/\sqrt{1+c_3^2}, c_0/2  \bigr\} $ and $\varepsilon$ is sufficiently small. 
If $I$ is not injective, there exists $x,y \in T(r)$ such that $x \ne y$ and $I(x)=I(y)$. 
Take $e, e' \in \Sigma_1(T)$ such that $x \in \bar{e}(r)$, $y \in \bar{e'}(r)$. 
Then obviously $I(\bar{e}(r)) \cap I(\bar{e'}(r)) \ne \emptyset$, hence $e \cap e' \ne \emptyset$. 
Taking $u \in e \cap e'$, $x,y \in T_u(r)$.
This is a contradiction, since $I|_{T_u(r)}$ is injective.
\end{proof}

\subsection{Proof of Lemma \ref{lem:henkei}}
Let us denote $r:=\rho_n\varepsilon$, i.e. $\tilde{T}=T(r)$.
First we recall what we have to show (we denote $v$ in the statement of Lemma \ref{lem:henkei} as $v_1$):
\begin{quote}
For any $v_1 \in V(T)$, 
there exists $Z \subset T(r)$, a neighborhood of $\partial T(r)$, and a 
diffeomorphism $\varphi: Z \to T(r) \setminus \bigl\{i(v_1)\bigr\}$, such that
$d^+(\varphi) \le \const_n \diam(|T|) \varepsilon^{-1}$ and $\varphi$ is an identity on some neighborhood of $\partial T(r)$. 
\end{quote}
We use abbreviations 
$d:=\diam(|T|)$ and 
$d(v):=\dist_{|T|}(v,v_1)\,(\forall v \in V(T))$.

\begin{rem}\label{rem:lengths}
For any $e \in \Sigma_1(T)$,  
$(\text{length of $F(|e|)$})/\varepsilon$ is uniformly bounded, since $F$ satisfies 
(2) in Definition \ref{defn:triangulation}.
Therefore, it is enough to prove Lemma \ref{lem:henkei} assuming that 
lengths of $F(|e|)$ are same for all $e \in \Sigma_1(T)$.
\end{rem}

For any subtree $S$ of $T$ such that $v_1 \in V(S)$, 
let $\nu_S$ be the normal vector of $\partial S(r/2)$ which points inward with respect to $S(r/2)$. 
We regard $S(r/2)$ as a submanifold of $S(r)$ (see Remark \ref{rem:subset}). 
For sufficiently small $c>0$, 
there exists an embedding 
$E_S: \partial S(r/2) \times (-cr, cr) \to S(r)$ such that ($t$ denotes the coordinate on $(-cr,cr)$):
\[
E_S(z,0)=z, \qquad \partial_t E_S(z,0) = \nu_S(z), \qquad \partial_t^2 E_S(z,t)=0.
\]
In the last equation, $\partial_t^2$ is defined by the Levi-Civita connection associated with $g_{S(r)}$.
Since $\nu_S$ points inward, $E_S^{-1}(S(r/2))=\partial S(r/2) \times [0,cr)$. 
Note that we may take $c>0$ so that it depends only on $n$ (hence, independent on $T$ and $S$).
We fix such $c$ and denote it as $c_4$.

Define a manifold $X_S$ by 
\[
X_S:= \bigl(T(r) \setminus S(r/2)\bigr) \bigcup_{E_S|_{\partial S(r/2) \times (-c_4r,0]}} \partial S(r/2) \times (-c_4r, d).
\]
We equip $X_S$ with a metric $g_{X_S}$, which is defined in the following manner.
First, we define a metric $g$ on $\partial S(r/2) \times (-c_4r, d\,)$ as follows 
($\pr_{\partial S(r/2)}$ denotes the projection to $\partial S(r/2)$).
\[
g:= (\pr_{\partial S(r/2)})^* (g_{T(r)}|_{\partial S(r/2)}) + dt^2.
\]
Setting cut off function $\alpha: (-c_4,0] \to [0,1]$ such that
$\alpha \equiv 1$ near $-c_4$ and $\alpha \equiv 0$ near $0$, 
we define a metric $g_{X_S}$ on $X_S$ so that 
\begin{itemize}
\item $g_{X_S}=g_{T(r)}$ on $\bigl(T(r) \setminus S(r/2)\bigr) \setminus \Ima E_S$.
\item $g_{X_S}= \alpha(t/r) {E_S}^*(g_{T(r)}) + \bigl(1-\alpha(t/r)\bigr) g$ on 
$\partial S(r/2) \times (-c_4r,0]$.
\item $g_{X_S}= g$ on $\partial S(r/2) \times [0,d)$.
\end{itemize}

Consider the case $S=T$. Then, there exists a diffeomorphism 
\[
\kappa: \bigl(T(r) \setminus T(r/2)\bigr) \cup \Ima E_T \to X_T, 
\]
which is an identity on some neighborhood of $\partial T(r)$, and satisfies 
$d^+(\kappa: g_{T(r)}, g_{X_T}) \le \const_n d \varepsilon^{-1}$.
Hence it is enough to show the following lemma:

\begin{lem}\label{lem:W}
There exists $Y \subset X_T$, a neighborhood of $\partial T(r)$ in $X_T$, 
and a diffeomorphism $\varphi':Y \to T(r) \setminus \{i(v_1)\}$ such that $\varphi'$ is an identity on some neighborhood of $\partial T(r)$, and 
$d^+(\varphi': g_{X_T}, g_{T(r)}) \le \const_n$.
\end{lem}

Actually, once we prove Lemma \ref{lem:W}, $Z:=\kappa^{-1}(Y)$ and $\varphi:=\varphi' \circ \kappa$ satisfy 
the requirements of Lemma \ref{lem:henkei}. 

To prove Lemma \ref{lem:W}, first we define $Y \subset X_T$.
Fix a cut off function $\chi: [1/3, 2/3] \to [0,1]$ such that 
$\chi \equiv 1$ near $1/3$, $\chi \equiv 0$ near $2/3$ and 
$\chi(t) + \chi(1-t)=1$.
We define $\bar{d} \in C^\infty\bigl(T(r)\bigr)$ as follows:
\begin{itemize}
\item For each $v \in V(T)$, $\bar{d} \equiv d(v)$ on 
$C^-_v(r):=B_v(r) \cup \bigcup_{w \in N_T(v)} A^-_{vw}(r)$.
\item For each $v \in V(T)$, $w \in N_T(v)$ and 
$x=h+ t\overrightarrow{vw} \in A^+_{vw}(r)$, 
\[
\bar{d}\bigl([x]\bigr):= \chi(t) d(v) + \chi(1-t) d(w).
\]
Since $\chi(t)+\chi(1-t)=1$, this is well-defined.
\end{itemize}
Let $S$ be a subtree of $T$, satisfying $v_1 \in V(S)$. 
We define $Y_S \subset X_S$ by 
\[
Y_S:=\bigl(T(r) \setminus S(r/2)\bigr) \cup \bigl\{(z,t) \bigm{|} z \in \partial S(r/2),
0 \le t < \bar{d}(z) \bigr\}, 
\]
and define $Y \subset X_T$ by $Y:=Y_T$.

Next we construct a diffeomorphism 
$\varphi': Y_T \to T(r) \setminus \{i(v_1)\}$, 
which satisfies the requirements of Lemma \ref{lem:W}.

Let $m:= \big\lvert V(T) \big\rvert$, 
and take an arbitrary increasing sequence of subtrees of $T$: 
\[
\{v_1\}=T_1 \subset T_2 \subset \cdots \subset T_m = T.
\]
Our idea is to construct a sequence of diffeomorphisms between Riemannian manifolds
\[
Y_T = Y_{T_m} \to Y_{T_{m-1}} \to \cdots \to Y_{T_2} \to Y_{T_1} \to T(r) \setminus \{i(v_1)\}. 
\]
We abbreviate the Riemannian metric $g_{X_{T_j}}|_{Y_j}$ as $g_j$. 
To spell out the proof, we introduce some notations:
\begin{itemize}
\item
For any $u \in V(T)$, $v \in N_T(u)$ and $0 \le a \le r$, we define 
$R^0_{uv}(a), R^1_{uv}(a) \subset B_u(r)$ by 
\begin{align*}
R^0_{uv}(a)&:=\bigl\{ h + te_{uv} \bigm{|}h \in H_{uv}, \,  \sqrt{|h|^2+t^2}=a, \, |h| \le c_3 t \bigr\}, \\
R^1_{uv}(a)&:=\bigl\{ h + te_{uv} \bigm{|}h \in H_{uv}, \, \sqrt{|h|^2+t^2}=a, \, |h| \ge c_3 t \bigr\}.
\end{align*}
Moreover, for $0 \le b < c \le r$, we define 
$R^0_{uv}(b,c), R^1_{uv}(b,c) \subset B_u(r)$ by 
\begin{align*}
R^0_{uv}(b,c)&:=\bigl\{ h+ te_{uv} \bigm{|}h \in H_{uv}, \, b < \sqrt{|h|^2+t^2} < c,\, |h| \le c_3t \bigr\}, \\
R^1_{uv}(b,c)&:=\bigl\{ h+ te_{uv} \bigm{|}h \in H_{uv}, \, b < \sqrt{|h|^2+t^2} < c,\, |h| \ge c_3t \bigr\}.
\end{align*}
$R^i_{uv}[b,c), R^i_{uv}(b,c], R^i_{uv}[b,c]\,(i=0,1)$ are defined in similar way.

\begin{rem}\label{rem:C4}
It is easy to check from Definition \ref{defn:triangulation} (4) that 
the following holds for any $u \in V(T)$:
\[
v,w \in N_T(u),\quad  v \ne w \implies R^0_{uv}(a) \subset R^1_{uw}(a), \quad 
R^0_{uv}(b,c) \subset R^1_{uw}(b,c).
\]
\end{rem}

\item
For $1 \le j \le m$, let $v_j$ be the only element of $V(T_j) \setminus V(T_{j-1})$, 
and let $w_j$ be the only element of 
$V(T_{j-1}) \cap N_T(v_j)$.
\item We define $A_j \subset T(r)$ by 
\[
A_j:=B_{v_j}(r) \cup A_{v_jw_j}(r) \cup A_{w_jv_j}(r).
\]
\begin{center}
\input{AJ.tpc}

Fig. 3: shaded region is $A_j$. 
\end{center}
Moreover, we define $B_j, C_j \subset Y_{T_j}$ and $D_j, E_j \subset Y_{T_{j-1}}$ as follows: 
\begin{align*}
B_j&:= R^1_{v_jw_j}(r/2,r] \cup \bigl\{(z,t) \bigm{|} z \in R^1_{v_jw_j}(r/2), \, 0 \le t < d(v_j) \bigr\}, \\
C_j&:= \bigl(A_j \setminus T_j(r/2) \bigr) \cup \bigl\{(z,t) \bigm{|} z \in A_j \cap \partial T_j(r/2), \, 0 \le t  < \bar{d}(z) \bigr\}, \\
D_j&:= \bigl\{ (z,t) \bigm{|} z \in A_j \cap \partial T_{j-1}(r/2), \, 0 \le t < d(w_j) \bigr\},\\
E_j&:= \bigl(A_j \setminus T_{j-1}(r/2) \bigr) \cup D_j.
\end{align*}
\end{itemize}
\begin{rem}\label{rem:iota}
For each $j$, 
$Y_{T_j} \setminus C_j$ is naturally identified with $Y_{T_{j-1}} \setminus E_j$.
The identification map 
$\iota_j: Y_{T_j} \setminus C_j \to Y_{T_{j-1}} \setminus E_j$ preserves the metric:
$\iota_j^*g_{j-1} = g_j$. 
\end{rem}

\begin{lem}\label{lem:psiphi}
For each $j=2, \ldots,m$, there exists a diffeomorphism 
$\varphi_j:Y_{T_j} \to Y_{T_{j-1}}$ 
which satisfies the following properties:
\begin{enumerate}
\item[(a):] $\varphi_j|_{Y_{T_j} \setminus C_j} \equiv \iota_j$.
\item[(b):] $d^+(\varphi_j|_{B_j \cap \varphi_j^{-1}(D_j)}: g_j, g_{j-1}) \le 1$. 
\item[(c):] $d^+(\varphi_j: g_j, g_{j-1}) \le c_5$, where $c_5$ is a constant which depends only on $n$. 
\item[(d):] $\varphi_j(R^1_{v_jw_j}[r/2,r]) \subset A_j \setminus B_{w_j}(r)$.
\end{enumerate}
\end{lem}
\begin{rem}
In (d), notice that $A_j \setminus B_{w_j}(r) \subset Y_{T_{j-1}}$, since
$A_j \setminus B_{w_j}(r)$ is disjoint from $T_{j-1}(r/2)$.
\end{rem}
\begin{proof}
Notice that 
the boundary of 
$\partial T_j(r/2) \cap A_j$ and the boundary of 
$\partial T_{j-1}(r/2) \cap A_j$ are same, and described as
\[
\{ h+ te_{w_jv_j} \mid h \in H_{w_jv_j},\,  \sqrt{|h|^2+t^2}=r/2, \, |h| = c_3t \}.
\]
There exists a diffeomorphism $\psi: \partial T_j(r/2) \cap A_j \to \partial T_{j-1}(r/2) \cap A_j$ satisfying 
\begin{enumerate}
\item[($\psi$-1):]
$\psi$ is an identity on some neighborhood of boundaries. 
\item[($\psi$-2):]
$d^+(\psi)$ is bounded by some constant which depends only on $n$, where $\partial T_j(r/2)$, $\partial T_{j-1}(r/2)$ 
are equipped with the restriction of $g_{T(r)}$. 
\item[($\psi$-3):]
$d^+(\psi|_{R^1_{v_jw_j}(r/2)}) \le 1$. 
\end{enumerate} 
It is possible to achieve ($\psi$-2), since we assumed in Remark \ref{rem:lengths} that lengths of $F(|e|)$ are same for all $e \in \Sigma_1(T)$. 

Now, define $F_j \subset Y_{T_j}$ and a diffeomorphism $\bar{\psi}: F_j \to D_j$ by 
\begin{align*}
F_j&:=\bigl\{ (z,t) \bigm{|} z \in \partial T_j(r/2) \cap A_j, \, \bar{d}(z)-d(w_j) \le t < \bar{d}(z) \bigr\}, \\
\bar{\psi}(z,t)&:= \bigl( \psi_j(z), t- \bar{d}(z) + d(w_j) \bigr). 
\end{align*}
Then, ($\psi$-2) implies that $d^+(\bar{\psi}: g_j, g_{j-1})$ is bounded by some constant which depends only on $n$. 
Moreover, since $(z,t) \in B_j \cap F_j \implies z \in R^1_{v_jw_j}(r/2)$, ($\psi$-3) implies 
$d^+(\bar{\psi}|_{B_j \cap F_j}: g_j, g_{j-1}) \le 1$. 

Finally, we can extend $\bar{\psi}$ to $\varphi_j: Y_{T_j} \to Y_{T_{j-1}}$ so that (a)-(d) are satisfied. 
(a) can be achieved by ($\psi$-1). 
Since $B_j \cap \varphi_j^{-1}(D_j) = B_j \cap F_j$, 
(b) is immediate from $d^+(\bar{\psi}|_{B_j \cap F_j}: g_j, g_{j-1}) \le 1$. 
(c) can be achieved because 
$C_1 \setminus F_1, \ldots, C_m \setminus F_m$ are isometric, and 
$E_1 \setminus D_1, \ldots, E_m \setminus D_m$ are isometric. 
(d) can be achieved since
$R^1_{v_jw_j}[r/2, r] \cap F_j = \emptyset$. 
\end{proof}

In the following, (a), (b), (c), (d) mean conditions in Lemma \ref{lem:psiphi}.
Moreover, for $j=1,\ldots,m$, $|\, \cdot \, |_{g_j}$ is abbreviated as 
$|\, \cdot \,|_j$. 

\begin{lem}\label{lem:C53}
Let $\varphi_j: Y_{T_j} \to Y_{T_{j-1}}\,(j=2, \ldots, m)$ be diffeomorphisms which satisfy the conditions in Lemma \ref{lem:psiphi}.
For any $y \in Y_T = Y_{T_m}$ and $1 \le j \le m$, define $y_j \in Y_{T_j}$ by 
$y_m:=y$, $y_{j-1}:= \varphi_j(y_j)$. 
If there exists $\xi \in T_{y_j}Y_{T_j}$ such that 
$|d\varphi_j(\xi)|_{j-1} > |\xi|_j$, 
then at least one of the following holds:
\begin{enumerate}
\item[(A):] Any $k>j$ and $\zeta \in T_{y_k}Y_{T_k}$ satisfies $|d\varphi_k(\zeta)|_{k-1} = |\zeta|_k$.
\item[(B):] 
The number of $k<j$, such that there exists $\zeta \in T_{y_k}Y_{T_k}$ satisfying 
$|d\varphi_k(\zeta)|_{k-1} > |\zeta|_k$, is at most $1$.
\end{enumerate}
\end{lem}
\begin{proof} 
Assume that there exists $\xi \in T_{y_j}Y_{T_j}$ such that 
$|d\varphi_j(\xi)|_{j-1} > |\xi|_j$. 
(a) and Remark \ref{rem:iota} imply that $y_j \in C_j$, hence $y_{j-1} \in E_j$.
We consider the following two cases:
\begin{enumerate}
\item[(i):] There exists $u \in N_T(v_j) \setminus \{w_j\}$ such that 
$y_j \in R^0_{v_ju}(r/2,r] \cup \{(z,t)\mid z \in R^0_{v_ju}(r/2), 0 \le t <  d(v_j)\}$.
\item[(ii):] Otherwise.
\end{enumerate}
First we consider the case (i).
By Remark \ref{rem:C4}, $y_j \in B_j$.
Notice that $E_j$ is divided into three parts:
\[
D_j, \quad 
A_j \setminus B_{w_j}(r) , \quad R^0_{w_jv_j}(r/2,r].
\]
\begin{enumerate}
\item[(i)-(i):]
Assume that 
$y_{j-1} \in D_j$.
Then, $y_j \in B_j \cap \varphi_j^{-1}(D_j)$. 
Hence (b) implies $|d\varphi_j(\xi)|_{j-1} \le |\xi|_j$ for any $\xi \in T_{y_j}Y_{T_j}$, 
contradicting the assuption.
\item[(i)-(ii):]
Assume that $y_{j-1} \in A_j \setminus B_{w_j}(r)$. 
In this case, $y_k \notin C_k$ for any $k<j$.
Hence (a) and Remark \ref{rem:iota} imply that
for any $k<j$, $y_k=y_{j-1}$ and 
$|d\varphi_k(\zeta)|_{k-1}=|\zeta|_k\, (\forall \zeta \in T_{y_k}Y_{T_k})$.
Therefore (B) holds.
\item[(i)-(iii):]
Assume that $y_{j-1} \in R^0_{w_jv_j}(r/2,r]$. 
Let $k_0$ be the unique integer such that $w_j=v_{k_0}$. 
We claim that $|d\varphi_k(\zeta)|_{k-1}=|\zeta|_k\, (\forall \zeta \in T_{y_k}Y_{T_k})$ for any $k<j$, $k \ne k_0$ (hence (B) holds).
To prove the claim, we proceed as follows: 

\textbf{Step 1:}
Since $R^0_{w_jv_j}(r/2,r] \cap C_k = \emptyset$ for $k_0<k<j$, 
(a) and Remark \ref{rem:iota} show that $\varphi_k$ is identity (and isometry) on 
$R^0_{w_jv_j}(r/2,r]$ for any $k_0<k<j$. 
This proves the claim for $k_0<k<j$. 

\textbf{Step 2:}
Since $y_{k_0} = y_{j-1} \in R^0_{w_jv_j}(r/2,r] \subset R^1_{v_{k_0}w_{k_0}}(r/2,r]$, 
(d) implies that $y_{k_0-1} \in A_{k_0} \setminus B_{w_{k_0}}(r)$. 
Hence (i)-(ii) proves the claim for $k<k_0$.

\end{enumerate}

Finally, we consider the case (ii). In this case, $y_k \notin C_k$ for any $k>j$.
Hence (a) and Remark \ref{rem:iota} show 
$|d\varphi_k(\zeta)|_{k-1}=|\zeta|_k\,(\forall \zeta \in T_{y_k}Y_{T_k})$ for any $k>j$. Hence (A) holds.
\end{proof}

Finally we prove Lemma \ref{lem:W}.
Take $\varphi_2, \ldots, \varphi_m$ as in Lemma \ref{lem:psiphi}, and let 
$\varphi'':= \varphi_2 \circ \cdots \circ \varphi_m: Y_{T_m} \to Y_{T_1}$. 
Then, Lemma \ref{lem:C53} implies that for any $y \in Y_T$, 
the number of $2 \le j \le m$ such that there exists $\xi \in T_{y_j} Y_{T_j}$ satisfying 
$|d\varphi_j(\xi)|_{j-1} >|\xi|_j$ is at most $3$. 
Hence (c) shows that $d^+(\varphi'': g_m, g_1) \le (c_5)^3$. 

On the other hand, $T_1 = \{v_1\}$, $Y_{T_1}=T(r) \setminus B_{v_1}(r/2)$.
Hence there exists a diffeomorphism 
$\varphi''': Y_{T_1} \to T(r) \setminus \{i(v_1)\}$ such that 
$d^+(\varphi''': g_1, g_{T(r)})\le \const_n$. 
Hence $\varphi':= \varphi''' \circ \varphi''$ satisfies
$d^+(\varphi': g_{X_T}, g_{T(r)}) \le \const_n$
(recall that 
$g_m = g_{X_T}|_{Y_{T_m}}$).

\textbf{Acknowledgments.}

The author would like to appreciate 
Professor Kenji Fukaya for his warm encouragement and precious comments on the preliminary version of this paper. 
He also thanks the referee for many useful comments. 
The most part of this research was conducted when the author was supported by JSPS KAKENHI Grant Number 11J01157.
The author is currently supported by JSPS KAKENHI Grant Number 25800041.

\end{document}